\pgfplotsset{compat=1.17}
\definecolor{mycolour1}{RGB}{153,153,153}
\newlength{\mytikzwidth}
\tikzset{
	declare function={
		h(\x,\y)=\mytikzwidth*(1/\x+(\y-1)/((((\y-1)*\y)-1))/(((\x-1)*\x));
	}
}
\newlength{\mytikzwidtj}
\tikzset{
	declare function={
		f(\x,\y)=\mytikzwidtj*(1/\x+(\y-1)/((((\y-1)*\y)-1))/(((\x-1)*\x));
	}
}
\newcommand{\interval}[5]{
	\fill [mycolour1] ({-.01*(2*(#2-3)+1)*2\textheight},{-1*#3-.1}) rectangle ({-.01*(2*(#1-2))*2\textheight},{-1*#3+.1});
	\ifnum#4=1
	\node at ({-.01*(2*(#1-2))*2\textheight},{-1*#3+.3}) {$\langle #1$};
	\fi
	\draw ({-.01*(2*(#1-2))*2\textheight},{-1*#3-.15})--({-.01*(2*(#1-2))*2\textheight},{-1*#3+.15});
	\ifnum#5=1
	\node at ({-.01*(2*(#2-3)+1)*2\textheight},{-1*#3+.3}) {$#2\rangle$};
	\fi
	\draw ({-.01*(2*(#2-3)+1)*2\textheight},{-1*#3-.15})--({-.01*(2*(#2-3)+1)*2\textheight},{-1*#3+.15});
}
\begin{document}
	
	\markboth{Maiken Gravgaard, Ying Wai Lee}{Decomposition of real numbers into sums of {L\"uroth} sets}
	
	\catchline{}{}{}{}{}
	
	\title{Decomposition of real numbers into sums of {L\"uroth} sets}

	\author{Maiken Gravgaard\footnote{
			Department of Mathematics, Aarhus University, Ny Munkegade 118,
			8000 Aarhus, Denmark}}
	
	\address{Department of Mathematics, Aarhus University, Ny Munkegade 118\\
		Aarhus, 8000,Denmark\,\\
		\email{maiken@math.dk} }
	
	\author{Ying Wai Lee}
	
	\address{Department of Mathematics, University of York\\
		York, YO10 5DD, United Kingdom\\
		yingwai.lee@york.au.dk }
	
	\maketitle
	
	\begin{history}
		\received{(14/06/2025)}
		\revised{(18/11/2025)}
		\accepted{(01/02/2026)}
	\end{history}
	
	\begin{abstract}
		We study the decomposition of real numbers into sums of L\"uroth sets, which are defined by numbers whose L\"uroth expansions have prescribed digit constraints. We establish several results on the congruence modulo 1 of sums of L\"uroth sets, including summands with digits bounded above, below, and combinations of the two. We also analyse the Hausdorff dimension of L\"uroth sets and their sums. The results extend classical findings on continued fractions to L\"uroth expansions.
	\end{abstract}
	
	\keywords{L\"uroth expansions; Sumsets; General Cantor sets; Hausdorff dimension; Fractals.}
	
	\ccode{Mathematics Subject Classification 2020: 11J70, 11J83, 11K55, 28A80, 37A05}
	
	\section{Introduction}\label{sectintro}
	Continued fractions and dynamical systems are fundamental concepts in mathematics. Connections between number theory and fractal geometry can be studied via decomposing numbers as sums of elements from specific sets, such as those defined by restricted digit representations.
	
	Let $A,B\subseteq \mathbb{R}$ be two subsets of the real numbers. The sumset of $A$ and $B$, denoted $A+B$, is defined as the Minkowski sum of $A$ and $B$:
	\begin{align*}
		A+B
		\coloneqq
		\left\{a+b : a\in A \text{ and } b\in B\right\}.
	\end{align*}
	The above definition extends naturally to the sumset of multiple subsets of $\mathbb{R}$. For any $n\in\mathbb{N}$, the $n$-fold sumset of $A$ with itself, denoted $nA$, is defined as:
	\begin{align*}
		n A\coloneqq \left\{a_1+a_2+\dots+a_n:a_1,a_2,\ldots,a_n\in A\right\}.
	\end{align*}
	We adopt the following notation for congruence modulo 1: $A\equiv B\mod1$, if $A+\mathbb{Z}=B+\mathbb{Z}$; that is every element of $B$ can be written as the sum of an element of $A$ and an integer, and vice versa. Otherwise, we write $A\not\equiv B\mod1$.
	
	Let $C_{1/3}$ be the middle-thirds Cantor set. A classical result~\cite{randolph1940, steinhaus1920} states that 
	\begin{equation}\label{middlethird}
		C_{1/3}+C_{1/3} = [0,2].
	\end{equation}
	Although $C_{1/3}$ is a fractal with Lebesgue measure $0$ and Hausdorff dimension $\log{2}/\log{3}$, two of them sum up to an interval of length 2. The middle-thirds Cantor set and the result \eqref{middlethird} can be understood from a number theory point of view. Notice that
	\begin{align*}
		C_{1/3}
		=\left\{\sum_{n=1}^{\infty} \frac{a_n}{3^n}\in[0,1]:a_n\in\{0,2\}\text{ for all $n\in\mathbb{N}$}\right\},
	\end{align*}
	that is, $C_{1/3}$ is the set of numbers whose ternary expansion does not contain any ones. With this interpretation, \eqref{middlethird} implies that $C_{1/3}+C_{1/3} \equiv\mathbb{R}\mod1$. In other words, every real number can be written as a sum of an integer and two numbers whose ternary expansions do not contain any ones.
	
	This connection between base expansions and additive structure illustrates how fractal sets can possess rich number-theoretic properties. In particular, it motivates the study of sums of fractal sets and their role in problems concerning the decomposition of real numbers into sums of elements with restricted digit representations.
	
	\subsection{Results for Continued Fractions}\label{subsectrfcf}
	With the above example in mind, one could choose a different representation for the real numbers than ternary expansions, for example, continued fractions.
	Define, for any $k\in\mathbb{N}$,
	\begin{align*}
		F_{\leq k}
		\coloneqq
		\left\{[a_1,a_2,a_3,\ldots] : 1\leq a_n \leq k \text{ for all } n\in\mathbb{N}\right\},
	\end{align*}
	where $[a_1,a_2,a_3,\ldots]$ denotes the continued fraction for the partial quotients $(a_n)_{n\in\mathbb{N}}$, that is,
	\begin{align*}
		[a_1,a_2,a_3,\ldots]
		\coloneqq\frac{1}{\displaystyle a_1+\frac{1}{\displaystyle a_2+\frac{1}{a_3+\cdots}}}.
	\end{align*}
	
	Sumsets that are induced by continued fractions are studied by various mathematicians. In a work of Hall~\cite[Theorem 3.1]{hall1947}, it is proved that every real number can be expressed as a sum of an integer and two continued fractions with partial quotients not greater than 4; in other words,
	\begin{align*}
		F_{\leq 4}+F_{\leq 4} \equiv \mathbb{R} \mod 1.
	\end{align*}
	In particular, every real number can be decomposed into the sum of two badly approximable numbers. Several papers have since appeared that extend this classical result. Below we mention the most relevant for our purposes. By employing refined versions of the techniques used by Hall, Hlavka~\cite[Theorems 6 \& 9]{Hlavka1975} proved that
	\begin{align*}
		F_{\leq 3}+F_{\leq 4} \equiv \mathbb{R} \mod 1; &&
		F_{\leq 2}+F_{\leq 7} \equiv \mathbb{R} \mod 1.
	\end{align*}
	Hlavka~\cite[Theorems 11]{Hlavka1975} also extended the study into the sums of multiple sets:
	\begin{align*}
		&F_{\leq 2}+F_{\leq 3}+F_{\leq 3} \equiv \mathbb{R} \mod 1, \\
		&F_{\leq 2}+F_{\leq 2}+F_{\leq 4} \equiv \mathbb{R} \mod 1, \\
		&F_{\leq 2}+F_{\leq 2}+F_{\leq 2}+F_{\leq 2} \equiv \mathbb{R} \mod 1.
	\end{align*}
	Hlavka~\cite{Hlavka1975} also proved that $F_{\leq 2}+F_{\leq 4} \not\equiv \mathbb{R} \mod 1$. Divi\v{s}~\cite[Theorems 1 \& 2]{divis1973} proved that some of these results of Hlavka are optimal, in the sense that
	\begin{align*}
		F_{\leq 3}+F_{\leq 3}\not\equiv \mathbb{R} \mod 1; &&
		F_{\leq 2}+F_{\leq 2}+F_{\leq 2} \not\equiv \mathbb{R} \mod 1.
	\end{align*}
	Astels~\cite[Theorem 1.1]{astels2001sums}~\cite[Theorem 1]{astels2002sums} later invented another tool to prove that
	\begin{align*}
		F_{\leq 2}+F_{\leq 5} \equiv \mathbb{R} \mod 1; &&
		F_{\leq 2}+F_{\leq 2}+F_{\leq 3} \equiv \mathbb{R} \mod 1.
	\end{align*}
	
	The sets in which the partial quotients are bounded from below are studied by Cusick among others. Define for any $k\in\mathbb{N}$,
	\begin{align*}
		F_{\geq k}
		\coloneqq
		\left\{[a_1,a_2,a_3,\ldots] : a_n \geq k \text{ for all } n\in\mathbb{N}\right\}\cup F'_{\geq k},
	\end{align*}
	where $F'_{\geq k}$ is the set consisting of 0 and all continued fractions of finite lengths with partial quotients at least $k$; that is, continued fractions of finite lengths are also allowed in $F_{\geq k}$. Cusick~\cite[Theorem 1]{cusick1971sums} proved that
	\begin{align*}
		F_{\geq 2}+F_{\geq 2}\equiv\mathbb{R} \mod 1.
	\end{align*}
	Cusick--Lee~\cite{lee1971sums} extend this result by showing that for any $k\in\mathbb{N}$,
	\begin{align}\label{eq: ksk}
		k F_{\geq k} \equiv \mathbb{R} \mod 1.
	\end{align}
	
	Good~\cite[Theorem 2]{Good_1941} proved that for any $k\in\mathbb{N}$, if $k\geq20$ then
	\begin{align*}
		\frac{1}{2}+\frac{1}{2\log{(k+2)}}
		<\dim{F_{\geq k}}
		<\frac{1}{2}+\frac{\log{\log{(k-1)}}}{2\log{(k-1)}},
	\end{align*}
	where $\dim$ denotes the Hausdorff dimension. Further refinements of Good's bounds, along with an analysis of the Hausdorff dimension of the sets $F_{\leq k}$, were recently shown by Das--Fishman--Simmons--Urbański in~\cite{Das_2023}. Shulga~\cite[Theorem 1.2]{Shulga2024CusickCantor} invented an algorithm and proved that for any $k\in\mathbb{N}$, if $k\geq2$ then
	\begin{align*}
		\left[0,\frac{1}{k-1}\right]\subseteq F_{\geq k}+F_{\geq k};
	\end{align*}
	as well as some other results on sets of the form $F_{\geq k_1}+F_{\geq k_2}$. In a recent work, Gayfulin--Nesharim~\cite{gayfulin2025realnumbersumreal} derive more results for the summands in Shulga's algorithm.
	
	Difference and product sets derived from continued fraction expansions are also studied by Astels~\cite{astels2000,astels2001sums,astels2002sums}. However, we shift our focus to the L\"uroth expansions and seek to establish analogous results for their corresponding sumsets.
	
	\subsection{L\"uroth Expansion}\label{subsectle}
	In this subsection, we explain what the L\"uroth expansion (or the L\"uroth representation of numbers) is, and why its fractal sumsets are natural to study, due to the dynamical systems involved. For any $x\in(0,1]$, there exists a unique sequence $(d_n)_{n\in\mathbb{N}}$ of positive integers not equal to 1, referred to as digits, such that 
	\begin{align*}
		x
		&= [d_1, d_2, d_3, \dots] \\
		&\coloneqq\sum_{n=1}^{+\infty}\frac{1}{d_n\prod_{j=1}^{n-1}d_j(d_j-1)} \\
		&=\frac{1}{d_1}+\frac{1}{d_1(d_1-1)d_2}+\frac{1}{d_1(d_1-1)d_2(d_2-1)d_3}+\cdots,
	\end{align*}
	where for any $n\in\mathbb{N}$, $d_n\in\mathbb{N}\setminus\{1\}$. The notation $[d_1, d_2, d_3,\ldots]$ denotes the L\"uroth representation when the brackets contain $d$'s instead of $a$'s, distinguishing it from the continued fraction representation $[a_1, a_2, a_3, \ldots]$.
	
	The middle-thirds Cantor set $C_{1/3}$ has an invariant property under the times-3 map (see Figure~\ref{fig: base-3}). The map shifts base-3 digits left (a symbolic shift), so the map restricted to $C_{1/3}$ is a dynamical system conjugate to the full shift on two symbols, meaning it mimics the dynamics of all possible binary sequences (that is, using symbols 0 and 2). Thus, it preserves the structure of $C_{1/3}$, making $C_{1/3}$ invariant under the map and a rich object in dynamical systems theory.
	
	Just as the times-3 map preserves the middle-thirds Cantor set, a more complicated map preserves continued fraction expansions with restricted partial quotients. The Gauss map (also known as the Gauss--Kuzmin--Wirsing operator) generates the continued fraction partial quotients and dynamically shifts the continued fraction expansions. Hence, the sets $F_{\leq k}$ and $F_{\geq k}$ are both invariant under this map. The Gauss map restricted to $F_{\leq k}$ (or $F_{\geq k}$) models a symbolic dynamical system on the finite alphabet $\{1, 2, \ldots, k\}$ (or infinite alphabet $\{k, k+1, \ldots\}$). The set $F_{\leq k}$ is also well-studied in Diophantine approximation (see~\cite[Theorem 1.4]{beresnevich2016metricdiophantineapproximationaspects}) and Fourier analysis (see~\cite{kaufman1980continued,queffelec2003analyse,sahlsten2024fourier}).
	
	
	\begin{figure}
		\centering
		\begin{minipage}{.5\textwidth}
			\centering
			\begin{tikzpicture}
				\begin{axis}[
					axis lines = left,
					axis x line=center,
					axis y line=center,
					width=1.1\textwidth,
					height=1.1\textwidth,
					xmin=-.075,
					xmax=1.15,
					ymin=-.075,
					ymax=1.15,
					xtick={1,1/3,2/3},
					xticklabels={1,$1/3$,$2/3$},
					ytick={1},
					xlabel = \(x\),
					ylabel = \(3x\mod1\),
					axis equal,
					]
					\foreach \d in {0,1,2} {
						\addplot[domain={\d/3:(\d+1)/3}, samples=2] {3*x-\d};
					}
					\draw[dotted] (  0,1) -- (  1,1); 
					\draw[dotted] (1/3,0) -- (1/3,1); 
					\draw[dotted] (2/3,0) -- (2/3,1); 
					\draw[dotted] (  1,0) -- (  1,1); 
				\end{axis}
			\end{tikzpicture}
			\caption{Times-3 Map}
			\label{fig: base-3}
		\end{minipage}%
		\begin{minipage}{.5\textwidth}
			\centering
			\begin{tikzpicture}
				\begin{axis}[
					axis lines = left,
					axis x line=center,
					axis y line=center,
					width=1.1\textwidth,
					height=1.1\textwidth,
					xmin=-.075,
					xmax=1.15,
					ymin=-.075,
					ymax=1.15,
					xtick={1/3,1/2,1},
					xticklabels={$1/3$,$1/2$,1},
					ytick={1},
					xlabel = \(x\),
					ylabel = \(1/x\mod1\),
					axis equal,
					]
					\draw[dotted] (  0,1) -- (  1,1); 
					\draw[dotted] (  1,0) -- (  1,1); 
					\foreach \d in {1, 2, ..., 30} {
						\addplot[smooth, domain={1/(\d+1):1/\d}, samples=300] {1/x-\d};
					}
					\draw[dotted] (1/2,0) -- (1/2,1); 
					\draw[dotted] (1/3,0) -- (1/3,1); 
					\draw[dotted] (1/4,0) -- (1/4,1); 
					\draw[dotted] (1/5,0) -- (1/5,1); 
					\draw[dotted] (1/6,0) -- (1/6,1); 
					\draw[dotted] (1/7,0) -- (1/7,1); 
					\draw[dotted] (1/8,0) -- (1/8,1); 
					\draw[dotted] (1/9,0) -- (1/9,1); 
				\end{axis}
			\end{tikzpicture}
			\caption{Gauss--Kuzmin--Wirsing Operator}
			\label{fig: Gauss}
		\end{minipage}
	\end{figure}
	
	While the times-3 map consists of finitely many piecewise linear segments and the Gauss map consists of infinitely many piecewise non-linear segments (see Figure~\ref{fig: Gauss}), both play fundamental roles in dynamical systems and number theory. Each generates fractal structures in the interval $[0,1]$ through their invariant sets, and the sumsets of these fractals have been extensively studied. It is therefore natural to consider their synthesis, known as the \emph{linearised Gauss map} or the \emph{L\"uroth map}, combining linearity and infinite segments (see Figure~\ref{fig: Luroth map}). Like its predecessors, it exhibits rich arithmetic properties and gives rise to fractals in $[0,1]$ whose sumsets display intriguing behaviour. 
	
	\begin{figure}
		\centering
		\begin{minipage}{.5\textwidth}
			\centering
			\begin{tikzpicture}
				\begin{axis}[
					axis lines = left,
					axis x line=center,
					axis y line=center,
					width=1.1\textwidth,
					height=1.1\textwidth,
					xmin=-.075,
					xmax=1.15,
					ymin=-.075,
					ymax=1.15,
					xtick={1,1/2,1/3},
					xticklabels={1,$1/2$,$1/3$},
					ytick={1},
					xlabel = \(x\),
					ylabel = \(T(x)\),
					axis equal,
					]
					\draw[dotted] (  0,1) -- (  1,1);
					\foreach \d in {2, 3, ..., 31} {
						\addplot[domain={1/\d:1/(\d-1)}, samples=2] {(\d-1)*(\d*x-1)};
					}
					\draw[dotted] (1/2,0) -- (1/2,1); 
					\draw[dotted] (1/3,0) -- (1/3,1); 
					\draw[dotted] (1/4,0) -- (1/4,1); 
					\draw[dotted] (1/5,0) -- (1/5,1); 
					\draw[dotted] (1/6,0) -- (1/6,1); 
					\draw[dotted] (1/7,0) -- (1/7,1); 
					\draw[dotted] (1/8,0) -- (1/8,1); 
					\draw[dotted] (1/9,0) -- (1/9,1); 
					\draw[dotted] (  1,0) -- (  1,1); 
				\end{axis}
			\end{tikzpicture}
			\caption{L\"uroth Map $T:(0,1]\to(0,1]$}
			\label{fig: Luroth map}
		\end{minipage}%
		\begin{minipage}{.5\textwidth}
			\centering
			\begin{tikzpicture}
				\begin{axis}[
					axis lines = left,
					axis x line=center,
					axis y line=center,
					width=1.1\textwidth,
					height=1.1\textwidth,
					xmin=-.075,
					xmax=1.15,
					ymin=-.075,
					ymax=1.15,
					xtick={1,1/2,1/3},
					xticklabels={1,$1/2$,$1/3$},
					ytick={1},
					xlabel = \(x\),
					ylabel = \(1-T(x)\),
					axis equal,
					]
					\draw[dotted] (  0,1) -- (  1,1); 
					\foreach \d in {2, 3, ..., 31} {
						\addplot[domain={1/\d:1/(\d-1)}, samples=2] {1-(\d-1)*(\d*x-1)};
					}
					\draw[dotted] (1/2,0) -- (1/2,1); 
					\draw[dotted] (1/3,0) -- (1/3,1); 
					\draw[dotted] (1/4,0) -- (1/4,1); 
					\draw[dotted] (1/5,0) -- (1/5,1); 
					\draw[dotted] (1/6,0) -- (1/6,1); 
					\draw[dotted] (1/7,0) -- (1/7,1); 
					\draw[dotted] (1/8,0) -- (1/8,1); 
					\draw[dotted] (1/9,0) -- (1/9,1); 
					\draw[dotted] (  1,0) -- (  1,1); 
				\end{axis}
			\end{tikzpicture}
			\caption{Alternating L\"uroth Map}
			\label{fig: A-Luroth map}
		\end{minipage}
	\end{figure}
	
	The L\"uroth map, denoted $T:(0,1]\to(0,1]$, is defined by, for any $x\in(0,1]$,
	\begin{align*}
		T(x)\coloneqq\left\lfloor\frac{1}{x}\right\rfloor\left(\left\lfloor\frac{1}{x}+1\right\rfloor x-1\right).
	\end{align*}
	The L\"uroth map, as well as the other two mentioned maps above, can generate the digits or the partial quotients of numbers (see~\cite{dajanikraaikamp2002ergodictheoryofnumbers}). Recent studies on the L\"uroth map and L\"uroth expansion include the metric theory of Diophantine approximation~\cite{cao2013efficiency,lee2025lurothexpansionsdiophantineapproximation,tan2021approximation,tan2021dimension}.
	
	Define, for any $\mathcal{A}\subset\mathbb{N}\setminus\{1\}$, the L\"uroth set $L_{\mathcal{A}}$ to be the set of numbers whose digits in their L\"uroth representations are restricted to $\mathcal{A}$; that is
	\begin{align*}
		L_{\mathcal{A}}
		\coloneqq \{[d_1, d_2, d_3, \dots] : d_n\in \mathcal{A} \text{ for all } n\in\mathbb{N}\}.
	\end{align*}
	These kinds of sets have also been considered in previous work~\cite{brouwer2024cantor,lee2024explicitupperboundsdecay}. For any $\mathcal{A}\subseteq\mathbb{N}\setminus\{1\}$, the L\"uroth set $L_{\mathcal{A}}$ is invariant under the L\"uroth map $T$.
	
	Our focus is on the following sets, as we aim to draw analogous conclusions to the continued fraction sets we discussed. Define for any $k\in\mathbb{N}\setminus\{1\}$, L\"uroth sets of digits bounded from above $L_{\leq k}$ and respectively below $L_{\geq k}$ by:
	\begin{align*}
		L_{\leq k}
		\coloneqq L_{\{2,3,\ldots,k\}}, &&
		L_{\geq k}
		\coloneqq L_{\{k,k+1,\ldots\}},
	\end{align*}
	and for any $N_1,N_2\in\mathbb{N}\setminus\{1\}$,
	\begin{align*}
		L_{N_1,N_2}\coloneqq L_{\geq N_1} \cap L_{\leq N_2}.
	\end{align*}
	
	In the work of Brouwer~\cite[(6.1)]{brouwer2024cantor}, an attempt is made to apply Hall's tool~\cite[Theorem 2.2]{hall1947} in establishing the following statement on the sum of L\"uroth sets:
	\begin{align*}
		L_{\leq 4}+L_{\leq 4} \equiv \mathbb{R} \mod 1.
	\end{align*}
	The idea of a proof is outlined but not fully completed in~\cite{brouwer2024cantor}. We verify that the above claim is true by applying a tool developed by Hlavka~\cite[Theorem 3]{Hlavka1975} (the tools are summarised in Section~\ref{section: GCS}), alongside this, we present further results on the sums of L\"uroth sets in this paper.
	
	
	\section{Main Results}\label{section: results}
	In this paper, we provide two primary types of results concerning L\"uroth sets. The first type is on the equivalence modulo 1 of sums of L\"uroth sets, as presented in Section~\ref{section: congurence}. The second type is on the Hausdorff dimension of L\"uroth sets and their sums, as presented in Section~\ref{section: dimension}.
	
	\subsection{Congruence}\label{section: congurence}
	Theorem~\ref{thm: 1} can be seen as an analogue to the results of Hall, Hlavka, and Astels.
	\begin{theorem}\label{thm: 1}
		We have
		\begin{align*}
			L_{\leq 3}+L_{\leq 5} &\equiv\mathbb{R}\mod1; \\
			L_{\leq 4}+L_{\leq 4} &\equiv\mathbb{R}\mod1; \\
			L_{\leq 3}+L_{\leq 3}+L_{\leq 3} &\equiv \mathbb{R}\mod1; \\
			L_{\leq 3}+L_{\leq 3} &\not\equiv\mathbb{R}\mod1.
		\end{align*}
		Both $L_{\leq 3}+L_{\leq 3}$ and $L_{\leq 3}+L_{\leq 4}$ are not intervals.
	\end{theorem}
		However, we are unable to determine whether $L_{\leq 3}+L_{\leq 4}+\mathbb{Z}$ equals $\mathbb{R}$. Figure~\ref{fig: L_3 L_4} indicates that many sums in \(L_{\leq 3}+L_{\leq 4}\) cluster densely enough to suggest the presence of a genuine interval, say \([1,2]\). This observation naturally leads to the following conjecture.
	\begin{conjecture}
		\begin{align*}
			L_{\leq 3}+L_{\leq 4} \equiv \mathbb{R} \mod 1.
		\end{align*}
	\end{conjecture}
	
	By applying a result of Astels~\cite[Theorem 2.4]{astels2000} (see Proposition~\ref{prop: astels} in Section~\ref{section: GCS}), we give Theorem~\ref{thm: 2} which corresponds to the continued fraction setting as in~\cite[Theorem 1.5]{astels2000}.
	\begin{theorem}\label{thm: 2}
		Let $n\in\mathbb{N}$ and $k_1,k_2,\dots,k_n\in\mathbb{N}\setminus\{1\}$. Suppose
		\begin{align}
			\label{eq: (3)}
			\sum_{j=1}^{n}\frac{k_j-1}{k_j(k_j-1)-1}\geq1,
		\end{align}
		and, for any $j\in\{2,\ldots,n\}$,
		\begin{align}
			\label{eq: (4)}
			k_{j-1}\leq k_{j}\leq {{k_{j-1}}^2+2k_{j-1}+2}.
		\end{align}
		Then 
		\begin{align*}
			L_{\geq k_1}+L_{\geq k_2}+\cdots+L_{\geq k_n}\equiv\mathbb{R}\mod1.
		\end{align*}
	\end{theorem}
	Theorem~\ref{thm: 2} gives, for example,
	\begin{align*}
		L_{\geq3}+L_{\geq3}+L_{\geq5}\equiv\mathbb{R}\mod1, && 
		L_{\geq3}+L_{\geq4}+L_{\geq5}+L_{\geq6}\equiv\mathbb{R}\mod1.
	\end{align*}
	Condition~\eqref{eq: (3)} guarantees that the total length of the L\"uroth sets involved is at least \(1\). More precisely, the sum in condition~\eqref{eq: (3)} is exactly the diameter of the sumset. In particular, if this sum is strictly less than \(1\), then
	\begin{align*}
		L_{\geq k_1} + L_{\geq k_2} + \cdots + L_{\geq k_n} \not\equiv \mathbb{R} \mod 1,
	\end{align*}
	simply because the total length is insufficient. The only case in which the sum in
	condition~\eqref{eq: (3)} is equal to \(1\) is \(n=1\) and \(k_1=2\), where the conclusion of Theorem~\ref{thm: 2} is trivially true.
	
	Condition~\eqref{eq: (4)} plays a different role: it ensures that the sequence \((k_j)_j\) is not too widely spread, so that the maximal gap of each L\"uroth set can be filled by the next one in the sum. Note that condition~\eqref{eq: (3)} does not imply condition~\eqref{eq: (4)}. For example, Theorem~\ref{thm: 2} does not apply to the sumset
	\begin{align*}
		L_{\geq 3} + L_{\geq 4} + L_{\geq 5} + L_{\geq 9} + L_{\geq 245}.
	\end{align*}
	It remains unknown whether a condition of the form~\eqref{eq: (4)} is actually necessary in Theorem~\ref{thm: 2} when the sum in condition~\eqref{eq: (3)} is greater than \(1\).
	
	Corollary~\ref{coro: 3} below is a direct application of Theorem~\ref{thm: 2}, and can be seen as an analogue to the result \eqref{eq: ksk} of Cusick--Lee. We do, however, give a separate proof of this corollary as well. 
	\begin{corollary}\label{coro: 3}
		For any $k\in\mathbb{N}\setminus\{1\}$, 
		\begin{align}\label{eq: Luroth Cusick--Lee Type}
			k L_{\geq k} \equiv \mathbb{R} \mod 1.
		\end{align}
	\end{corollary}
	While \eqref{eq: Luroth Cusick--Lee Type} can be compared with \eqref{eq: ksk}, it is worth noting that in the definition of $L_{\geq k}$, we do not include L\"uroth representations of finite lengths. We note that for $k=2$, \eqref{eq: Luroth Cusick--Lee Type} is trivially true, as $L_{\geq 2}=(0,1] \equiv \mathbb{R} \mod 1$. \eqref{eq: Luroth Cusick--Lee Type} is optimal in the sense that for $k\in\mathbb{N}$, if $k\geq3$ then
	\begin{align*}
		(k-1) L_{\geq k} \not\equiv \mathbb{R} \mod 1,
	\end{align*}
	as the diameter of $L_{\geq k}$ is not sufficient:
	\begin{align*}
		(k-1)\operatorname{diam}{L_{\geq k}}=\frac{(k-1)^2}{(k-1)k-1}=1-\frac{k-2}{k^2-k-1}<1.
	\end{align*}
	
	We also suggest a new type of sumsets to study: mixing L\"uroth sets whose digits are both bounded above and below, as in Theorem~\ref{thm: 4}.
	\begin{theorem}\label{thm: 4}
		For any $k\in\mathbb{N}\setminus\{1\}$, 
		\begin{align*}
			L_{\leq  k+2}+L_{\geq k}\equiv\mathbb{R}\mod 1.
		\end{align*}
	\end{theorem}
	One can observe that $\operatorname{diam}{(L_{\leq k}+L_{\geq k})}=1$ and suggest the following conjecture as a stronger statement than Theorem~\ref{thm: 4}.
	\begin{conjecture}\label{conj: 2}
		For any $k\in\mathbb{N}\setminus\{1\}$, 
		\begin{align*}
			L_{\leq k}+L_{\geq k}\equiv\mathbb{R}\mod 1.
		\end{align*}
	\end{conjecture}
	Theorems~\ref{thm: 1} and~\ref{thm: 2}, together with Corollary~\ref{coro: 3}, are motivated by their analogues in the continued–fraction setting. This naturally raises the question of whether the converse direction also holds; namely, whether Theorem~\ref{thm: 4} and Conjecture~\ref{conj: 2} admit counterparts for continued fractions. One may notice that for any $k\in\mathbb{N}$, if $k\geq2$ then
	\begin{align*}
		\operatorname{diam}{\left(F_{\leq k}+F_{\geq k}\right)}
		=\frac{\sqrt{k^{2}+4k}-k}{2}-\frac{\sqrt{k^{2}+4k}-k}{2k}+\frac{1}{k}<1;
	\end{align*}
	in particular, $F_{\leq k}+F_{\geq k}\not\equiv\mathbb{R}\mod1$, as the diameter of the sum is not sufficient to make it contain an interval of length at least 1. One can formulate the following alternative conjecture instead.
	\begin{conjecture}\label{conj: CF upper and lower}
		For any $k\in\mathbb{N}$, 
		\begin{align*}
			F_{\leq2k}+F_{\geq k}\equiv\mathbb{R}\mod 1.
		\end{align*}
	\end{conjecture}
	
	In the context of Diophantine approximation, the result of Hall implies that every real number can be decomposed as a sum of two badly approximable numbers. Erd\H{o}s~\cite{erdHos1962representations} invented an algorithm based on binary representation and proved that every real number is a sum of two Liouville numbers. In an upcoming work, Gayfulin--Nesharim~\cite{gayfulin2025realnumbersumreal}, claims that in Shulga's algorithm, the partial quotients of both summands are unbounded for all irrational numbers. Both results of Erd\H{o}s and Gayfulin--Nesharim imply that every real number is a sum of two well approximable numbers.
	
	The next natural question would be whether every real number can be decomposed as a sum of a badly approximable number and a well approximable number. Conjecture \ref{conj: CF upper and lower} above is in the spirit of this question, but does not capture it exactly: while $F_{\leq k}$ is a proper subset of the badly approximable numbers, and there exist numbers in $F_{\geq k}$ that are not well approximable. Nevertheless, Conjecture \ref{conj: CF bad and well} below asks this as a bigger question.
	\begin{conjecture}\label{conj: CF bad and well}
		\begin{align*}
			\bigcup_{k\in\mathbb{N}}F_{\leq k}+[0,1]\setminus\bigcup_{k\in\mathbb{N}}F_{\leq k}\equiv\mathbb{R}\mod1.
		\end{align*}
	\end{conjecture}
	
	\subsection{Dimension}\label{section: dimension}
	We give some results on the Hausdorff dimension of the L\"uroth sets and their sums. Theorem~\ref{thm: dim A} gives the Hausdorff dimension of sums of L\"uroth sets in terms of the respective Hausdorff dimensions.
	\begin{theorem}\label{thm: dim A}
		For any finite $\mathcal{A},\mathcal{B}\subseteq\mathbb{N}\setminus\{1\}$,
		\begin{align*}
			\dim{(L_\mathcal{A}+L_\mathcal{B})}
			=\min{\left\{1,\dim{L_\mathcal{A}}+\dim{L_\mathcal{B}}\right\}}.
		\end{align*}
	\end{theorem}
	By an application of~\cite[Theorem II]{Moran_1946}, the Hausdorff dimension of $L_{\mathcal{A}}$ is the unique solution $s\in[0,1]$ to 
	\begin{align*}
		\sum_{d\in\mathcal{A}}
		\left(\frac{1}{d(d-1)}\right)^{s} = 1.
	\end{align*}
	We can approximate that $\dim L_{\leq 3}=0.600967\ldots>1/2$ and consequently $\dim(L_{\leq 3}+L_{\leq 3})=1$.
	
	Theorem~\ref{thm: dim B} is an analogy to the lower bound estimate to the result of Good~\cite[Theorem 2]{Good_1941}. 
	\begin{theorem}\label{thm: dim B}
		For any $k\in\mathbb{N}\setminus\{1\}$,
		\begin{align*}
			\dim{L_{\geq k}}>\frac{1}{2}+\frac{1}{2\log{\max{\{16,k\}}}}.
		\end{align*}
	\end{theorem}
	Notice that for any $k,N\in\mathbb{N}\setminus\{1\}$, if $k<N$ then we have $\dim{L_{\geq k}}\geq\dim{L_{k,N}}$. Hence, we can find a lower bound of $\dim{L_{\geq k}}$ by finding the supremum of $s_{k,N}>0$ such that
	\begin{align*}
		\sum_{d=k}^{N}\left(\frac{1}{d(d-1)}\right)^{s_{k,N}}>1.
	\end{align*}
	For some small values of $k\in\mathbb{N}\setminus\{1\}$, we have,
	\begin{align*}
		\dim{L_{\geq3}}>0.8209; && \dim{L_{\geq4}}>0.7740; && \dim{L_{\geq5}}>0.7500; \\
		\dim{L_{\geq6}}>0.7347; && \dim{L_{\geq7}}>0.7239; && \dim{L_{\geq8}}>0.7157;
	\end{align*}
	and so on. The improvement is not significant for larger values of $k$.
	
	Shulga’s algorithm does not directly extend to the setting of L\"uroth expansions, for example in the case of $k=3$ and 
	\begin{align*}
		x
		=\frac{12}{41}
		=[\dot7]+[\dot7]
		\in\left(L_{\geq3}+L_{\geq3}\right)\cap\left[0,\frac{1}{2}\right],
	\end{align*}
	where the dot indicates that, in the L\"uroth expansion, all subsequent digits following the initial digit are identical to the digit marked with a dot. Direct application of Shulga’s algorithm to this case does not produce two L\"uroth expansions with digits bounded below by 3. Instead, the algorithm yields the following L\"uroth representations:
	\begin{align*}
		c=[4,25,12,5,5,6,6,6,5,5,5,\ldots], &&
		b=[11,16,8,2,4,2,2,2,2,2,3,\ldots].
	\end{align*}
	However, we propose a result and a conjecture, as analogies to the result of Shulga.
	\begin{theorem}\label{thm: dim C}
		For any $k\in\mathbb{N}\setminus\{1\}$,
		\begin{align*}
			\dim{\left(L_{\geq k}+L_{\geq k}\right)=1}.
		\end{align*}
	\end{theorem}
	\begin{conjecture}
		For any $k\in\mathbb{N}\setminus\{1\}$, $L_{\geq k}+L_{\geq k}$ contains an interval of positive length in $\mathbb{R}$.
	\end{conjecture}

	\section{Results on General Cantor Sumsets}\label{section: GCS}
	
	In this section, we introduce the definition and some known results from the literature on general Cantor sets. They are useful in proving our main results. General Cantor sets are defined in a similar construction to the middle-thirds Cantor set. It starts from a closed and bounded interval in $\mathbb{R}$, and proceeds by removing open intervals at each level. The fractals of $F_{\leq k}$ and $F_{\geq k}$ are examples of general Cantor sets, so are $L_{\leq k}$, $L_{\geq k}$ and many other self-similar sets.
	
	We generalise the middle-thirds Cantor set $C_{1/3}$ into the general Cantor set $C_{\mathcal{A}}$ by the following construction. Let $I$ be the minimal closed interval that contains the targeted fractal set. Put $I^1\coloneqq I$, which is Level 0 in the construction. Let $O^1\subseteq I^1$ be an open interval. Then, at Level 1, the two closed intervals $I^2$ and $I^3$ are formed by removing $O^1$ from $I^1$:
	\begin{align*}
		I^1
		= I^2 \cup O^1 \cup I^3.
	\end{align*}
	To construct a general Cantor set, at each step $i\in\mathbb{N}$, we remove an open interval $O^i$ from the closed, non-empty interval $I^i$, retaining the pair of closed, non-empty intervals $I^{2i}$ and $I^{2i+1}$, and continue this process iteratively across each level.
	Figure~\ref{fig: GCS} is an illustration.
	\begin{figure}[H]
		\centering
		
		\begin{tikzpicture}
			\useasboundingbox (0,-0.45) rectangle (\mytikzwidth,0.45);
			\fill [mycolour1] (0,-0.1) rectangle (\mytikzwidth,0.1);
			\draw (0,0)--(\mytikzwidth,0);
			\node at (1/2*\mytikzwidth,.45) {$I^1$};
		\end{tikzpicture}
		
		\begin{tikzpicture}
			\useasboundingbox (0,-0.45) rectangle (\mytikzwidth,0.45);
			\fill [mycolour1] (0/3*\mytikzwidth,-0.1) rectangle (1/3*\mytikzwidth,0.1);
			\fill [mycolour1] (2/3*\mytikzwidth,-0.1) rectangle (3/3*\mytikzwidth,0.1);
			\draw (0,0)--(\mytikzwidth,0);
			\node at (1/6*\mytikzwidth,.45) {$I^2$};
			\node at (1/2*\mytikzwidth,.45) {$O^1$};
			\node at (5/6*\mytikzwidth,.45) {$I^3$};
		\end{tikzpicture}
		
		\begin{tikzpicture}
			\useasboundingbox (0,-0.45) rectangle (\mytikzwidth,0.45);
			\fill [mycolour1] (0/9*\mytikzwidth,-0.1) rectangle (1/9*\mytikzwidth,0.1);
			\fill [mycolour1] (2/9*\mytikzwidth,-0.1) rectangle (3/9*\mytikzwidth,0.1);
			\fill [mycolour1] (6/9*\mytikzwidth,-0.1) rectangle (7/9*\mytikzwidth,0.1);
			\fill [mycolour1] (8/9*\mytikzwidth,-0.1) rectangle (9/9*\mytikzwidth,0.1);
			\draw (0,0)--(\mytikzwidth,0);
			\node at ( 1/18*\mytikzwidth,.45) {$I^4$};
			\node at ( 3/18*\mytikzwidth,.45) {$O^2$};
			\node at ( 5/18*\mytikzwidth,.45) {$I^5$};
			\node at (13/18*\mytikzwidth,.45) {$I^6$};
			\node at (15/18*\mytikzwidth,.45) {$O^3$};
			\node at (17/18*\mytikzwidth,.45) {$I^7$};
		\end{tikzpicture}
		
		$\vdots$
		
		\caption{A Construction of a General Cantor Set}
		\label{fig: GCS}
	\end{figure}
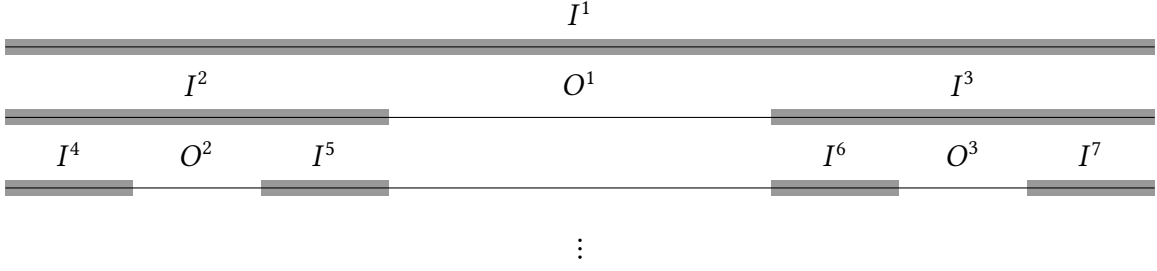
	The general Cantor set is obtained as $C\coloneqq I\setminus\bigcup_{i\in\mathbb{N}}O^i$, and $D\coloneqq ((I^i)_{i\in\mathbb{N}},(O^i)_{i\in\mathbb{N}})$ is referred to as a construction (or a derivation) of $C$. Note that there can be more than one construction that results in the same general Cantor set. For any interval $I\subseteq\mathbb{R}$, denote by $\lvert I \rvert$ the length of $I$. Hall~\cite{hall1947} proved the following result.
	
	\begin{proposition}[Hall {\cite[Theorem 2.2]{hall1947}}, 1947]
		Let $C_{\mathcal{A}}$ and $C_{\mathcal{B}}$ be two general Cantor sets with constructions $D_{\mathcal{A}}$ and $D_{\mathcal{B}}$. Write $I_{\mathcal{A}}^1=[x,x+a]$,  $I_{\mathcal{B}}^1=[y,y+b]$ and $e=\min\{a,b\}$. Suppose for any interval $I^{i}$ in either construction, with $I^{i} = I^{2i}\cup O^{i} \cup I^{2i+1}$, that
		\begin{align*}
			\lvert O^{i}\rvert
			\leq \min{\left\{\left\lvert I^{2i}\right\rvert, \left\lvert I^{2i+1}\right\rvert\right\}}.
		\end{align*}
		Then $[x+y, x+y+2e]\cup[x+y+a+b-2e, x+y+a+b]\subseteq C_{\mathcal{A}}+C_{\mathcal{B}}$. In particular, if $1/3\leq a/b\leq 3$, then $C_{\mathcal{A}}+C_{\mathcal{B}} = I_{\mathcal{A}}^{1}+I_{\mathcal{B}}^1$.
	\end{proposition}
	
	Two quantities are defined by Hlavka in~\cite{Hlavka1975} regarding a construction of a general Cantor set. Define $g_{\mathcal{A}}$ and $h_{\mathcal{A}}$ on the construction $D_{\mathcal{A}}$ of a general Cantor set.
	\begin{align*}
		g_{\mathcal{A}}
		=g(D_{\mathcal{A}})
		\coloneqq \sup_{j\in\mathbb{N}} \frac{\left\lvert O_{\mathcal{A}}^j \right\rvert}{\left\lvert I_{\mathcal{A}}^j \right\rvert}, &&
		h_{\mathcal{A}}
		=h(D_{\mathcal{A}})
		\coloneqq \inf_{j\in\mathbb{N}} \min{\left\{
			\frac{\left\lvert I_{\mathcal{A}}^{2j} \right\rvert}{\left\lvert I_{\mathcal{A}}^j \right\rvert},
			\frac{\left\lvert I_{\mathcal{A}}^{2j+1} \right\rvert}{\left\lvert I_{\mathcal{A}}^j \right\rvert}
			\right\}},
	\end{align*}
	where $g_{\mathcal{A}}$ captures the \emph{maximal removal ratio} and $h_{\mathcal{A}}$ captures the \emph{minimal retaining ratio} of the construction $D_{\mathcal{A}}$. By the quantities defined above, Hlavka proved the following results regarding the sum of two or more general Cantor sets.
	\begin{proposition}[Hlavka {\cite[Theorem 3]{Hlavka1975}}, 1975]\label{prop: hlavka3}
		Suppose there exist Cantor set constructions of $C_{\mathcal{A}}$ and $C_{\mathcal{B}}$ with all of the following conditions are satisfied:
		\begin{align*}
			g_{\mathcal{A}} g_{\mathcal{B}} \leq h_{\mathcal{A}} h_{\mathcal{B}}, &&
			g_{\mathcal{A}} \lvert I_{\mathcal{A}}\rvert \leq \lvert I_{\mathcal{B}} \rvert, &&
			g_{\mathcal{B}} \lvert I_{\mathcal{B}}\rvert \leq \lvert I_{\mathcal{A}} \rvert.
		\end{align*}
		Then $C_{\mathcal{A}}+C_{\mathcal{B}} = I_{\mathcal{A}}+I_{\mathcal{B}}$.
	\end{proposition}
	\begin{proposition}[Hlavka {\cite[Theorem 10]{Hlavka1975}}, 1975]\label{prop: hlavka10}
		Suppose there exist Cantor set constructions of $C_{\mathcal{A}_1}, \ldots, C_{\mathcal{A}_n}$ such that both of the following conditions are satisfied: for any $i,j \in \{1,\dots, n\}$,
		\begin{align*}
			h_{\mathcal{A}_i} \leq \frac{\lvert I_{\mathcal{A}_i} \rvert}{\lvert I_{\mathcal{A}_j} \rvert},
		\end{align*}
		and for any $i\in \{1,\dots, n\}$,
		\begin{align*}
			g_{\mathcal{A}_i}+h_{\mathcal{A}_i} \leq \sum_{j=1}^{n} h_{\mathcal{A}_j}.
		\end{align*}
		Then $C_{\mathcal{A}_1}+\dots+C_{\mathcal{A}_n} = I_{\mathcal{A}_1}+\dots+I_{\mathcal{A}_n}$.
	\end{proposition}
	The concept of \emph{thickness} was introduced by Astels in~\cite{astels2000}. For a construction $D_{\mathcal{A}}$ of the general Cantor set $C_{\mathcal{A}}$, we define
	\begin{align*}
		\tau(D_{\mathcal{A}})
		\coloneqq \inf_{i\in\mathbb{N}}{\left\{ \min \left\{
			\frac{\left\lvert I_{\mathcal{A}}^{2i}\right\rvert}{\left\lvert O_{\mathcal{A}}^{i}\right\rvert},
			\frac{\left\lvert I_{\mathcal{A}}^{2i+1}\right\rvert}{\left\lvert O_{\mathcal{A}}^{i}\right\rvert}\right\}\right\}}
		\geq\frac{h_{\mathcal{A}}}{g_{\mathcal{A}}}
		.
	\end{align*}
	Define the \emph{thickness} of the general Cantor set $C_{\mathcal{A}}$ by the supremum of $\tau(D_{\mathcal{A}})$ over all constructions $D_{\mathcal{A}}$ for $C_{\mathcal{A}}$, that is
	\begin{align*}
		\tau_{\mathcal{A}}
		=\tau(C_{\mathcal{A}})
		\coloneqq \sup_{D_{\mathcal{A}}}{\tau(D_{\mathcal{A}})},
	\end{align*}
	and also another quantity $\gamma$ on the general Cantor set $C_{\mathcal{A}}$:
	\begin{align*}
		\gamma_{\mathcal{A}}
		=\gamma(C_{\mathcal{A}})
		\coloneqq\frac{\tau_{\mathcal{A}}}{1+\tau_{\mathcal{A}}}\in[0,1).
	\end{align*}
	
	A construction $D=((I^i)_{i\in\mathbb{N}},(O^i)_{i\in\mathbb{N}})$ is said to be ordered, if for any $i\in\mathbb{N}$,
	\begin{align*}
		\left\lvert{O^i}\right\rvert
		\geq\max{\left\{\left\lvert{O^{2i}}\right\rvert,\left\lvert{O^{2i+1}}\right\rvert\right\}};
	\end{align*}
	that is, the lengths of removing intervals are always not greater than those at the previous level. One can observe that the following statements are equivalent to each other:
	\begin{itemize}
		\item $D=((I^i)_{i\in\mathbb{N}},(O^i)_{i\in\mathbb{N}})$ is ordered;
		\item for any $i\in\mathbb{N}$, $\lvert O^{i}\rvert \geq \sup\{\lvert O^{2i}\rvert, \lvert O^{2i+1}\rvert, \lvert O^{2(2i)}\rvert, \lvert O^{2(2i)+1}\rvert, \lvert O^{2(2i+1)}\rvert, \lvert O^{2(2i+1)+1}\rvert, \dots  \}$;
		\item for any $i,j\in\mathbb{N}$, if $I^i\subseteq I^j$ then $\lvert O^i\rvert\leq\lvert O^j\rvert$.
	\end{itemize}
	$D$ is said to be unordered if $D$ is not ordered. It is not hard to see that an ordered construction always exists. If $D_{\mathcal{A}}$ is ordered, then the thickness of the general Cantor set is exactly the thickness of $D_{\mathcal{A}}$; that is
	\begin{align}\label{thickness}
		\tau_{\mathcal{A}}=\tau(D_{\mathcal{A}})
		.
	\end{align}
	
	With the newly defined quantities, Astels established the following useful result concerning the sum of general Cantor sets. They are also applied in proving our main results. 
	\begin{proposition}[Astels {\cite[Theorem 2.4]{astels2000}}, 2000]\label{prop: astels}
		Let $S_{\gamma} = \gamma_{\mathcal{A}_1}+\cdots+\gamma_{\mathcal{A}_n}$. 
		\begin{enumerate}
			\item If $S_{\gamma} \geq 1$, then $C_{\mathcal{A}_1}+\dots+C_{\mathcal{A}_n}$ contains an interval. Otherwise, it contains a Cantor set of thickness at least 
			\begin{align*}
				\frac{S_{\gamma}}{1-S_{\gamma}}.
			\end{align*}
			Furthermore,
			\begin{align*}
				\dim{(C_{\mathcal{A}_1}+\cdots+C_{\mathcal{A}_n})}
				\geq \min{\left\{1,\frac{\log{2}}{\log\left(1+1/S_{\gamma}\right)}\right\}}
			\end{align*}
			\item Let $O_{\mathcal{A}_j}$ be a gap of maximal size in $C_{\mathcal{A}_j}$ (that is $\lvert O_{\mathcal{A}_j}\rvert=\sup_{i\in\mathbb{N}}{\lvert O_{\mathcal{A}_j}^i\rvert}$). Suppose for any $i\in\{2,\dots, n\}$ and $j\in\{1,\dots, i-1\}$,
			\begin{align}
				\label{eq: (12)}
				\lvert I_{\mathcal{A}_i} \rvert \geq \lvert O_{\mathcal{A}_j} \rvert,
			\end{align}
			and for any $i\in\{1,\dots, n-1\}$, 
			\begin{align}
				\label{eq: (13)}        
				\lvert I_{\mathcal{A}_1} \rvert+\cdots+\lvert I_{\mathcal{A}_i} \rvert
				\geq \lvert O_{\mathcal{A}_{i+1}} \rvert.
			\end{align}
			Then if $S_{\gamma} \geq 1$ we have $C_{\mathcal{A}_1}+\dots+C_{\mathcal{A}_n} = I_{\mathcal{A}_1}+\dots+I_{\mathcal{A}_n}$. Otherwise,
			\begin{align*}
				\tau_{C_{\mathcal{A}_1}+\dots+C_{\mathcal{A}_n}} \geq \frac{S_{\gamma}}{1-S_{\gamma}}.
			\end{align*}
		\end{enumerate}
	\end{proposition}
	
	\section{Constructions of L\"uroth Sets}\label{sect: Constructions}
	In this section, we give the constructions of the L\"uroth sets as general Cantor sets. The constructions below can be compared to the construction given in~\cite{lee2024explicitupperboundsdecay} to observe the differences.
	
	Let $N_1,N_2\in \mathbb{N}\setminus\{1\}$. Suppose $N_1<N_2$. Define $I_{N_1,N_2}\coloneqq [\langle N_2, N_1 \rangle]$, where for any $d\in\mathbb{N}\setminus\{1\}$, the left chevron and the right chevron are respectively defined by
	\begin{align*}
		\langle d
		&\coloneqq[d,\dot{N_2}]
		=\frac{1}{d}+\frac{1}{d(d-1)}\frac{N_2-1}{(N_2-1)N_2-1}, \\
		d\rangle
		&\coloneqq[d,\dot{N_1}]
		=\frac{1}{d}+\frac{1}{d(d-1)}\frac{N_1-1}{(N_1-1)N_1-1},
	\end{align*}
	where the dot indicates that, in the L\"uroth expansion, all subsequent digits following the initial digit are identical to the digit marked with a dot. When an expression includes at least one chevron, the order of operations is as follows: first, perform multiplication, division, and fractional operations; then, proceed with addition, chevron operations, and finally subtraction.
	
	We construct the L\"uroth set $L_{N_1,N_2}$ in the following \emph{Stepwise Complete Construction (SCC)} as a general Cantor set. At Level 0, we start with the following closed interval:
	\begin{align*}
		I_{N_1,N_2}^1
		\coloneqq I_{N_1,N_2}
		= \left[\langle N_2, N_1\rangle\right].
	\end{align*}
	For any $i\in\{1,\ldots,N_2-N_1\}$, at Level $i$, we have 
	\begin{align*}
		I_{N_1,N_2}^{2^i}
		= \left[\langle N_2,N_1+i \rangle\right], \,
		O_{N_1,N_2}^{2^{i-1}}
		= \left(N_1+i \rangle , \langle(N_1+i-1)\right), \,
		I_{N_1,N_2}^{2^i+1}
		= \left[\langle(N_1+i-1),(N_1+i-1)\rangle\right].
	\end{align*}
	The intervals $I_{N_1,N_2}^{2^i+1}$ and $I_{N_1,N_2}^{2^{(N_2-N_1)}}$ are all treated proportionally to $I_{\mathcal{A}}^{1}$ to give all further levels. We have $L_{N_1,N_2}=I_{N_1,N_2}\setminus\bigcup_{i\in\mathbb{N}}O_{N_1,N_2}^i$. The SCC for $L_{\leq 3}$, $L_{\leq 4}$, and $L_{\leq 5}$ illustrated in Figure~\ref{fig: L_3 L_4 L_5} as examples.
	
	To understand why the SCC yields $L_{N_1,N_2}$, observe that the numbers $\langle N_2$ and $N_1\rangle$ are respectively the smallest and the greatest elements in $L_{N_1,N_2}$. For each digit $d \in \{N_1, \dots, N_2\}$, the number $\langle d=[d, \dot{N_2}]$ is the smallest element in $L_{N_1,N_2}$ whose L\"uroth expansion begins with $d$, while $d\rangle=[d, \dot{N_1}]$ is the largest such element. Thus, the interval $[\langle d, d\rangle]$ contains all numbers whose L\"uroth expansions have first digit $d$, and all onward digits behave the same as those in $I_{N_1,N_2}$. Hence, the restriction on the first digit to $d$ is completed at each step (the last step, that is also at the last level, completes the restriction for digits first digit being $N_2-1$ and $N_2$ at the same time). The gap $(d+1\rangle, \langle d)$ consists of numbers whose second digit is less than $N_1$ or greater than $N_2$, and thus are excluded. This process is recursively applied to each retained subinterval through affine similarity, resulting in the same digit constraints on all further positions. Therefore, the SCC yields exactly the set $L_{N_1,N_2}$, realised as a general Cantor set.
	
	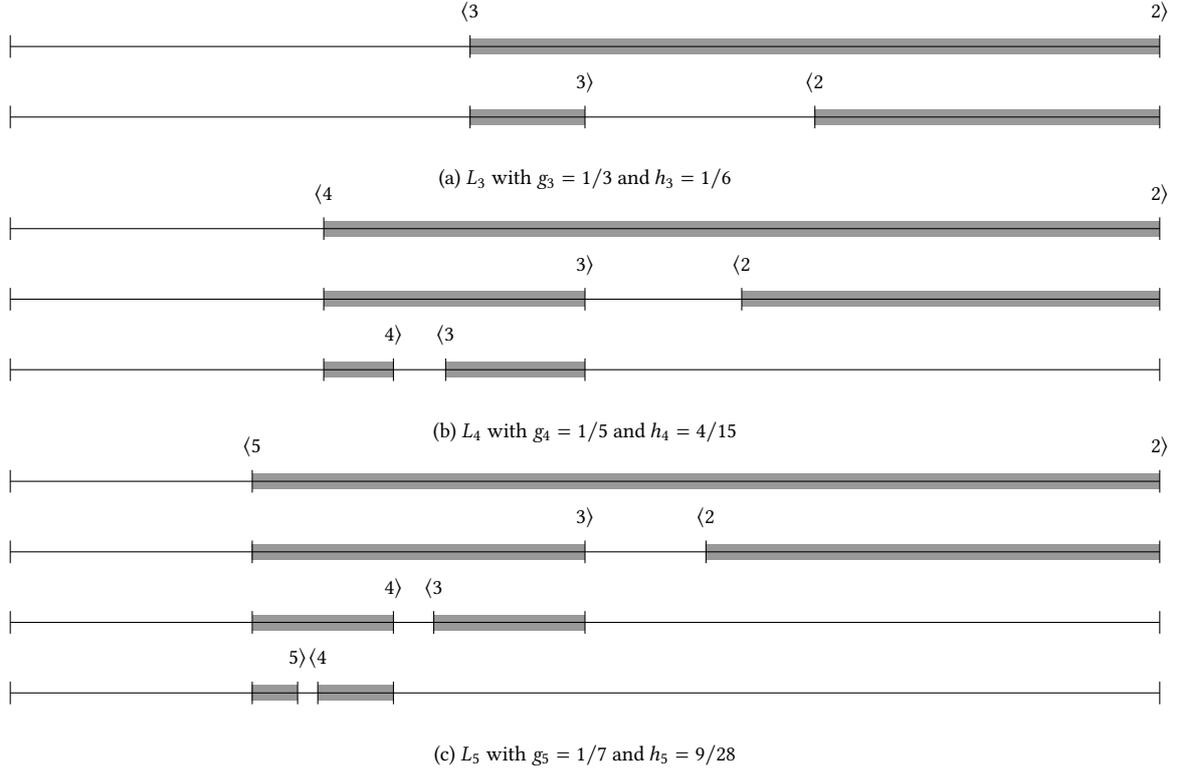
\begin{figure}
		\centering
		\begin{subfigure}[b]{\textwidth}
			\centering
			\scriptsize
			\begin{tikzpicture}
				\useasboundingbox (0,-0.45) rectangle (\mytikzwidth,0.45);
				\fill [mycolour1] ({h(3,3)},-0.1) rectangle ({h(2,2)},0.1);
				\draw (0,0)--(\mytikzwidth,0);
				\node at (1.0000\mytikzwidth,.45) {$2 \rangle$};
				\node at ({h(3,3)},.45) {$\langle 3$};
				\draw (0.0000000\mytikzwidth,-0.15)--+(0,0.3); 
				\draw (1.0000000\mytikzwidth,-0.15)--+(0,0.3);
				\draw ({h(3,3)},-0.15)--+(0,0.3);
			\end{tikzpicture}
			
			\begin{tikzpicture}
				\useasboundingbox (0,-0.45) rectangle (\mytikzwidth,0.45);
				\fill [mycolour1] ({h(3,3)},-0.1) rectangle ({h(3,2)},0.1);
				\fill [mycolour1] ({h(2,3)},-0.1) rectangle ({h(2,2)},0.1);
				\node at ({h(3,2)},.45) {$3 \rangle$};
				\node at ({h(2,3)},.45) {$\langle 2$};
				\draw (0,0)--(\mytikzwidth,0);
				\draw (1.0000000\mytikzwidth,-0.15)--+(0,0.3);
				\draw ({h(3,3)},-0.15)--+(0,0.3);
				\draw ({h(3,2)},-0.15)--+(0,0.3);
				\draw ({h(2,3)},-0.15)--+(0,0.3);
				\draw (0.0000000\mytikzwidth,-0.15)--+(0,0.3); 
			\end{tikzpicture}
			\caption{$L_{\leq 3}$ with $g_3=1/3$ and $h_3=1/6$}
			\label{fig: L_3}
		\end{subfigure}
		
		\begin{subfigure}[b]{\textwidth}
			\centering
			\scriptsize
			\begin{tikzpicture}
				\useasboundingbox (0,-0.45) rectangle (\mytikzwidth,0.45);
				\fill [mycolour1] ({h(4,4)},-0.1) rectangle ({h(2,2)},0.1);
				\draw (0,0)--(\mytikzwidth,0);
				\node at (1.0000\mytikzwidth,.45) {$2 \rangle$};
				\node at ({h(4,4)},.45) {$\langle 4$};
				\draw (0.0000000\mytikzwidth,-0.15)--+(0,0.3); 
				\draw (1.0000000\mytikzwidth,-0.15)--+(0,0.3);
				\draw ({h(4,4)},-0.15)--+(0,0.3);
			\end{tikzpicture}
			
			\begin{tikzpicture}
				\useasboundingbox (0,-0.45) rectangle (\mytikzwidth,0.45);
				\fill [mycolour1] ({h(4,4)},-0.1) rectangle ({h(3,2)},0.1);
				\fill [mycolour1] ({h(2,4)},-0.1) rectangle ({h(2,2)},0.1);
				\node at ({h(3,2)},.45) {$3 \rangle$};
				\node at ({h(2,4)},.45) {$\langle 2$};
				\draw (0,0)--(\mytikzwidth,0);
				\draw (1.0000000\mytikzwidth,-0.15)--+(0,0.3);
				\draw ({h(4,4)},-0.15)--+(0,0.3);
				\draw ({h(3,2)},-0.15)--+(0,0.3);
				\draw ({h(2,4)},-0.15)--+(0,0.3);
				\draw (0.0000000\mytikzwidth,-0.15)--+(0,0.3); 
			\end{tikzpicture}
			
			\begin{tikzpicture}
				\useasboundingbox (0,-0.45) rectangle (\mytikzwidth,0.45);
				\fill [mycolour1] ({h(4,4)},-0.1) rectangle ({h(4,2)},0.1);
				\fill [mycolour1] ({h(3,4)},-0.1) rectangle ({h(3,2)},0.1);
				\node at ({h(4,2)},.45) {$4 \rangle$};
				\node at ({h(3,4)},.45) {$\langle 3$};
				\draw (0,0)--(\mytikzwidth,0);
				\draw (1.0000000\mytikzwidth,-0.15)--+(0,0.3);
				\draw ({{h(4,4)}},-0.15)--+(0,0.3);
				\draw ({{h(4,2)}},-0.15)--+(0,0.3);
				\draw ({{h(3,4)}},-0.15)--+(0,0.3);
				\draw ({{h(3,2)}},-0.15)--+(0,0.3);
				\draw (0.0000000\mytikzwidth,-0.15)--+(0,0.3); 
			\end{tikzpicture}
			
			\caption{$L_{\leq 4}$ with $g_4=1/5$ and $h_4=4/15$}
			\label{fig: L_4}
		\end{subfigure}
		
		\begin{subfigure}[b]{\textwidth}
			\centering
			\scriptsize
			\begin{tikzpicture}
				\useasboundingbox (0,-0.45) rectangle (\mytikzwidth,0.45);
				\fill [mycolour1] ({h(5,5)},-0.1) rectangle ({h(2,2)},0.1);
				\draw (0,0)--(\mytikzwidth,0);
				\node at (1.0000\mytikzwidth,.45) {$2 \rangle$};
				\node at ({h(5,5)},.45) {$\langle 5$};
				\draw (0.0000000\mytikzwidth,-0.15)--+(0,0.3); 
				\draw (1.0000000\mytikzwidth,-0.15)--+(0,0.3);
				\draw ({h(5,5)},-0.15)--+(0,0.3);
			\end{tikzpicture}
			
			\begin{tikzpicture}
				\useasboundingbox (0,-0.45) rectangle (\mytikzwidth,0.45);
				\fill [mycolour1] ({h(5,5)},-0.1) rectangle ({h(3,2)},0.1);
				\fill [mycolour1] ({h(2,5)},-0.1) rectangle ({h(2,2)},0.1);
				\node at ({h(3,2)},.45) {$3 \rangle$};
				\node at ({h(2,5)},.45) {$\langle 2$};
				\draw (0,0)--(\mytikzwidth,0);
				\draw (1.0000000\mytikzwidth,-0.15)--+(0,0.3);
				\draw ({h(5,5)},-0.15)--+(0,0.3);
				\draw ({h(3,2)},-0.15)--+(0,0.3);
				\draw ({h(2,5)},-0.15)--+(0,0.3);
				\draw (0.0000000\mytikzwidth,-0.15)--+(0,0.3); 
			\end{tikzpicture}
			
			\begin{tikzpicture}
				\useasboundingbox (0,-0.45) rectangle (\mytikzwidth,0.45);
				\fill [mycolour1] ({h(5,5)},-0.1) rectangle ({h(4,2)},0.1);
				\fill [mycolour1] ({h(3,5)},-0.1) rectangle ({h(3,2)},0.1);
				\node at ({h(4,2)},.45) {$4 \rangle$};
				\node at ({h(3,5)},.45) {$\langle 3$};
				\draw (0,0)--(\mytikzwidth,0);
				\draw (1.0000000\mytikzwidth,-0.15)--+(0,0.3);
				\draw ({{h(5,5)}},-0.15)--+(0,0.3);
				\draw ({{h(4,2)}},-0.15)--+(0,0.3);
				\draw ({{h(3,5)}},-0.15)--+(0,0.3);
				\draw ({{h(3,2)}},-0.15)--+(0,0.3);
				\draw (0.0000000\mytikzwidth,-0.15)--+(0,0.3); 
			\end{tikzpicture}
			
			\begin{tikzpicture}
				\useasboundingbox (0,-0.45) rectangle (\mytikzwidth,0.45);
				\fill [mycolour1] ({h(5,5)},-0.1) rectangle ({h(5,2)},0.1);
				\fill [mycolour1] ({h(4,5)},-0.1) rectangle ({h(4,2)},0.1);
				\node at ({h(5,2)},.45) {$5 \rangle$};
				\node at ({h(4,5)},.45) {$\langle 4$};
				\draw (0,0)--(\mytikzwidth,0);
				\draw (1.0000000\mytikzwidth,-0.15)--+(0,0.3);
				\draw ({{h(5,5)}},-0.15)--+(0,0.3);
				\draw ({{h(5,2)}},-0.15)--+(0,0.3);
				\draw ({{h(4,5)}},-0.15)--+(0,0.3);
				\draw ({{h(4,2)}},-0.15)--+(0,0.3);
				\draw (0.0000000\mytikzwidth,-0.15)--+(0,0.3); 
			\end{tikzpicture}
			
			\label{fig: L_5}
			\caption{ $L_{\leq 5}$ with $g_5=1/7$ and $h_5=9/28$}
		\end{subfigure}
		
		\caption{Stepwise Complete Constructions (SCCs) for $L_{\leq 3}$, $L_{\leq 4}$, and $L_{\leq 5}$ in $[0,1]$}
		\label{fig: L_3 L_4 L_5}
	\end{figure}
	
	For example, in $L_{\leq 3}$, we have at Level 0:
	\begin{align*}
		I_3^1=I_3=[\langle3,2\rangle]=\left[\frac{2}{5},1\right];
	\end{align*}
	and at Level 1:
	\begin{align*}
		I_3^2=[\langle3,3\rangle]=\left[\frac{2}{5},\frac{1}{2}\right], &&
		O_3^1=(3\rangle,\langle2)=\left(\frac{1}{2},\frac{7}{10}\right), &&
		I_3^3=[\langle2,2\rangle]=\left[\frac{7}{10},1\right];
	\end{align*}
	$I_3^2$ and $I_3^3$ are both treated proportionally to $I_{3}^{1}$ to give all further levels.
	
	In general, for L\"uroth sets of the form $L_{N_1,N_2}$, the $g$- and $h$- values of the SCCs are given by their finite steps due to self-similarity. The following proposition gives the lengths of the intervals in the SCCs; we can also see that all SCCs are ordered.
	\begin{proposition}\label{prop: Stepwise Complete g h}
		Let $N_1,N_2\in\mathbb{N}\setminus\{1\}$. Suppose $N_1<N_2$. Under SCC, for any $i\in\{1,\ldots,N_2-N_1\}$, at Level $i$,
		\begin{itemize}
			\item the main interval of length (the interval on the left at Level $i-1$):
			\begin{align*}
				\left\lvert I_{N_1,N_2}^{2^{i-1}}\right\rvert
				=(N_1+i-1)\rangle-\langle N_2;
			\end{align*}
			\item the interval on the left of length:
			\begin{align*}
				\left\lvert I_{N_1,N_2}^{2^i}\right\rvert
				=N_1+i\rangle-\langle N_2;
			\end{align*}
			\item the gap of length (strictly decreasing):
			\begin{align}
				\label{eq: SCC gap length}
				\left\lvert O_{N_1,N_2}^{2^{i-1}}\right\rvert
				=\langle (N_1+i-1)-N_1+i\rangle;
			\end{align}
			\item the interval on the right of length:
			\begin{align*}
				\left\lvert I_{N_1,N_2}^{2^i+1}\right\rvert
				=(N_1+i-1)\rangle-\langle(N_1+i-1).
			\end{align*}
		\end{itemize}
	\end{proposition}
	
	\begin{proposition}
		Let $N_1,N_2\in\mathbb{N}\setminus\{1\}$. Suppose $N_1<N_2$. The SCC for $L_{N_1,N_2}$ is ordered.
	\end{proposition}
	\begin{proof}
		By self-similarity, it suffices to prove that $i\in\{1,\ldots,N_2-N_1-1\}$,
		\begin{align*}
			\left\lvert O_{N_1,N_2}^{2^{i-1}}\right\rvert
			\geq\max{\left\{\left\lvert O_{N_1,N_2}^{2^{i}}\right\rvert,\left\lvert O_{N_1,N_2}^{2^i+1}\right\rvert\right\}}.
		\end{align*}
		On the one hand, from \eqref{eq: SCC gap length}, one has that for any $i\in\{1,\ldots,N_2-N_1-1\}$,
		\begin{align*}
			\left\lvert O_{N_1,N_2}^{2^{i-1}}\right\rvert
			&= \frac{1}{N_1+i-1}+\frac{\langle N_2}{(N_1+i-1)(N_1+i-2)}-\frac{1}{N_1+i}-\frac{N_1\rangle}{(N_1+i)(N_1+i-1)}.
		\end{align*}
		One can see the following difference is positive:
		\begin{align*}
			\Delta_{N_1,N_2,i,0}
			&\coloneqq \left\lvert O_{N_1,N_2}^{2^{i-1}}\right\rvert-\left\lvert O_{N_1,N_2}^{2^{i}}\right\rvert \\
			&=\frac{1}{N_1+i-1}-\frac{2}{N_1+i}+\frac{1}{N_1+i+1}+\frac{\langle N_2}{N_1+i-1}\left(\frac{1}{N_1+i-2}-\frac{1}{N_1+i}\right) \\
			&\qquad-\frac{N_1 \rangle}{N_1+i}\left(\frac{1}{N_1+i-1}-\frac{1}{N_1+i+1}\right) \\
			&>\frac{1}{N_1+i-1}-\frac{2}{N_1+i}+\frac{1}{N_1+i+1}-\frac{N_1 \rangle}{N_1+i}\left(\frac{1}{N_1+i-1}-\frac{1}{N_1+i+1}\right) \\
			&\geq N_1 \rangle\left(\frac{1}{N_1+i-1}-\frac{2}{N_1+i}+\frac{1}{N_1+i+1}\right)-\frac{N_1 \rangle}{N_1+i}\left(\frac{1}{N_1+i-1}-\frac{1}{N_1+i+1}\right) \\
			&=N_1 \rangle\left(\frac{1}{N_1+i-1}-\frac{2}{N_1+i}+\frac{1}{N_1+i+1}-\frac{1}{N_1+i}\left(\frac{1}{N_1+i-1}-\frac{1}{N_1+i+1}\right)\right) \\
			&=0.
		\end{align*}
		Hence, \eqref{eq: SCC gap length} is strictly decreasing, that is
		for any $i\in\{1,\ldots,N_2-N_1-1\}$,
		\begin{align*}
			\left\lvert O_{N_1,N_2}^{2^{i-1}}\right\rvert
			\geq \left\lvert O_{N_1,N_2}^{2^{i}}\right\rvert.
		\end{align*}
		
		On the other hand, from similarity, one has that for any $i\in\{1,\ldots,N_2-N_1-1\}$,
		\begin{align*}
			\left\lvert O_{N_1,N_2}^{2^i+1}\right\rvert
			=\frac{\left\lvert O_{N_1,N_2}^1\right\rvert}{\left\lvert I_{N_1,N_2}^1\right\rvert}\left\lvert I_{N_1,N_2}^{2^i+1}\right\rvert
			=\alpha_{N_1,N_2}\left((N_1+i-1)\rangle-\langle (N_1+i-1)\right)
			,
		\end{align*}
		where
		\begin{align*}
			\alpha_{N_1,N_2}
			\coloneqq\frac{\left\lvert O_{N_1,N_2}^1\right\rvert}{\left\lvert I_{N_1,N_2}^1\right\rvert}
			=\frac{\langle N_1-N_1+1\rangle}{N_1\rangle-\langle N_2}
			.
		\end{align*}
		One can see that the following difference is positive:
		\begin{align*}
			\Delta_{N_1,N_2,i,1}
			&\coloneqq \left\lvert O_{N_1,N_2}^{2^{i-1}}\right\rvert-\left\lvert O_{N_1,N_2}^{2^i+1}\right\rvert \\
			&=\left(1+\alpha_{N_1,N_2}\right)\langle(N_1+i-1)-N_1+i\rangle-(N_1+i-1)\rangle\alpha_{N_1,N_2} \\
			&=\frac{1}{N_1+i-1}\left(1+\frac{\langle N_2-\alpha_{N_1,N_2}(N_1\rangle-\langle N_2)}{N_1+i-2}-\frac{N_1\rangle}{N_1+i}\right)-\frac{1}{N_1+i} \\
			&=\frac{1}{N_1+i-1}\left(1+\frac{N_1+1\rangle-\langle N_1+\langle N_2}{N_1+i-2}-\frac{N_1\rangle}{N_1+i}\right)-\frac{1}{N_1+i} \\
			&\geq\frac{1}{N_1+i-1}\left(1+\frac{N_1+1\rangle-1/N_1}{N_1+i-2}-\frac{N_1\rangle}{N_1+i}\right)-\frac{1}{N_1+i} \\
			&=\frac{1}{N_1+i-1}\left(1+\frac{1}{N_1+i-2}\frac{2-N_1}{{N_1}^{3}-2N_1-1}-\frac{1}{N_1+i}\frac{N_1-1}{N_1\left(N_1-1\right)-1}\right)-\frac{1}{N_1+i} \\
			&=\frac{(N_1-2)\left({N_1}^3+(i-1){N_1}^2+(i-3)N_1-i\right)}{(N_1+i-2)(N_1+i-1)(N_1+i)(N_1+1)({N_1}^2-N_1-1)} \\
			&\geq0
			.
		\end{align*}
		Hence, for any $i\in\{1,\ldots,N_2-N_1-1\}$,
		\begin{align*}
			\left\lvert O_{N_1,N_2}^{2^{i-1}}\right\rvert
			\geq\left\lvert O_{N_1,N_2}^{2^i+1}\right\rvert.
		\end{align*}
	\end{proof}
	Since SCC are ordered, Proposition~\ref{prop: Stepwise Complete g h} also helps to compute the thickness of L\"uroth sets of the form $L_{N_1,N_2}$ and their $\gamma$-values. This is discussed in Section~\ref{section: thickness}.
	
	\section{Thickness of L\"uroth Sets}\label{section: thickness}
	In this section, we establish a closed expression of thickness $\tau_{N_1,N_2}$ for L\"uroth sets of the form $L_{N_1,N_2}$. The construction SCC from the previous section is ordered, so by \eqref{thickness} it suffices to find the thickness of the SCC for $L_{N_1,N_2}$. The following proposition serves as a foundation for most of the main results in this paper:
	\begin{proposition}\label{prop: Stepwise Complete tau}
		Let $N_1,N_2\in\mathbb{N}\setminus\{1\}$. Suppose $N_1<N_2$. Then the thickness of $L_{N_1,N_2}$ is
		\begin{align}
			\label{eq: tau L N1 N2}
			\tau_{N_1,N_2}
			&=\frac{N_2\rangle-\langle N_2}{\langle(N_2-1)-N_2\rangle} \\
			&=\frac{(N_2-N_1)(N_2-2)(N_1N_2-N_1-N_2+2)}{{N_1}^2({N_2}^3-2{N_2}^2+2)-{N_1}(2{N_2}^3-5{N_2}^2+{N_2}+4)-({N_2}^2-{N_2})} \nonumber
			.
		\end{align}
	\end{proposition}
	In other words, the supremum $\tau_{N_1,N_2}$ is realised at the left interval at the last step in the SCC. By the definition of SCC and since it is ordered, we have:
	\begin{align*}
		\tau_{N_1,N_2}
		&=\min_{x\in\{0,1,\ldots,N_2-N_1-1\}}{\left\{\min{\left\{
				\frac{N_1+x+1\rangle-\langle N_2}{\langle N_1+x-N_1+x+1\rangle}
				,
				\frac{N_1+x\rangle-\langle N_1+x}{\langle N_1+x-N_1+x+1\rangle}
				\right\}}\right\}}.
	\end{align*}
	Define $f_0,f_1:[0,+\infty)\to\mathbb{R}$ by, for any $x\geq0$, 
	\begin{align*}
		&f_0(x)=f_{N_1,N_2,0}(x)
		\coloneqq \frac{N_1+x+1\rangle-\langle N_2}{\langle N_1+x-N_1+x+1\rangle}, \\
		&f_1(x)=f_{N_1,N_2,1}(x)
		\coloneqq \frac{N_1+x\rangle-\langle N_1+x}{\langle N_1+x-N_1+x+1\rangle}.
	\end{align*}
	To prove Proposition~\ref{prop: Stepwise Complete tau}, it suffices to prove the following lemmas as together they conclude that
	\begin{align*}
		\tau_{N_1,N_2}=f_0(N_2-N_1-1).
	\end{align*}
	The first lemma takes care of the thickness of $L_{\leq 3}$, $L_{\leq 4}$ and $L_{\leq 5}$.
	\begin{lemma}\label{lemma: tau A}
		For $N_1=2$ and any $N_2\in\{3,4,5\}$, \eqref{eq: tau L N1 N2} holds.
	\end{lemma}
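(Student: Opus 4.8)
The plan is to reduce the claim, for these three small cases, to the comparison of a handful of explicit rational numbers, exploiting that when $N_1=2$ and $N_2\in\{3,4,5\}$ the SCC has only $N_2-N_1\in\{1,2,3\}$ genuinely new levels before self-similarity takes over. Since the SCC is ordered, \eqref{thickness} gives $\tau_{2,N_2}=\tau(\mathrm{SCC})$, and, as recorded just before the lemma, the self-similarity of the right intervals $I^{2^i+1}$ and of the last left interval $I^{2^{N_2-N_1}}$ forces every ratio $|I^{\mathrm{child}}|/|O^{\mathrm{parent}}|$ occurring anywhere in the construction to equal one of the finitely many values $f_0(x)$ or $f_1(x)$ with $x\in\{0,1,\dots,N_2-N_1-1\}$. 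Hence $\tau_{2,N_2}=\min_{0\le x\le N_2-N_1-1}\min\{f_0(x),f_1(x)\}$, and it suffices to evaluate these finitely many numbers and check that the minimum is $f_0(N_2-N_1-1)$, which by definition equals $\frac{N_2\rangle-\langle N_2}{\langle(N_2-1)-N_2\rangle}$.

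To carry this out I would first tabulate the chevrons. With $N_1=2$ the formulas of Section~\ref{sect: Constructions} simplify to $d\rangle=\tfrac1d+\tfrac1{d(d-1)}$ and $\langle d=\tfrac1d+\tfrac{N_2-1}{d(d-1)\bigl((N_2-1)N_2-1\bigr)}$; specialising $N_2$ gives the relevant chevrons as concrete fractions. For example, for $N_2=4$ one gets $2\rangle=1$, $3\rangle=\tfrac12$, $4\rangle=\tfrac13$, $\langle 2=\tfrac7{11}$, $\langle 3=\tfrac{25}{66}$, $\langle 4=\tfrac3{11}$, whence $f_0(0)=\tfrac53$, $f_1(0)=\tfrac83$, $f_0(1)=\tfrac43$, $f_1(1)=\tfrac83$, so the minimum is $f_0(1)=\tfrac43=f_0(N_2-N_1-1)$. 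The case $N_2=3$ is degenerate—only $x=0$ occurs, with $f_0(0)=\tfrac12<f_1(0)=\tfrac32$—while for $N_2=5$ one has $x\in\{0,1,2\}$ and the six values $\tfrac{11}{4},\tfrac{15}{4},\tfrac72,\tfrac{15}{4},\tfrac94,\tfrac{15}{4}$ (listed as $f_0(0),f_1(0),f_0(1),f_1(1),f_0(2),f_1(2)$), whose minimum $\tfrac94$ is again attained at $x=2$. Proposition~\ref{prop: Stepwise Complete g h} serves as a cross-check on the interval and gap lengths entering these ratios.

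Finally, to see that \eqref{eq: tau L N1 N2} holds verbatim I would substitute the closed forms for $\langle d$ and $d\rangle$ into $\frac{N_2\rangle-\langle N_2}{\langle(N_2-1)-N_2\rangle}$, clear denominators, and simplify to the displayed rational function of $N_1$ and $N_2$; equivalently, one checks directly that plugging $N_1=2$ and $N_2=3,4,5$ into that rational function returns $\tfrac12,\tfrac43,\tfrac94$, matching the thicknesses just computed. The only step that calls for thought rather than pure arithmetic is confirming that the minimum sits at the last level and not at an earlier one—that is, the inequality $f_0(N_2-N_1-1)\le\min\{f_0(x),f_1(x)\}$ for every smaller $x$, which in the general Proposition~\ref{prop: Stepwise Complete tau} is the job of the subsequent lemmas but here is settled outright by the explicit finite tables above. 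Beyond this bookkeeping I anticipate no real obstacle.
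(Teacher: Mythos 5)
Your proposal is correct and follows essentially the same route as the paper: reduce, via orderedness and self-similarity of the SCC, to the finitely many ratios $f_0(x),f_1(x)$ for $x\in\{0,\dots,N_2-N_1-1\}$, evaluate them explicitly, and check that the minimum is $f_0(N_2-N_1-1)=\frac{N_2\rangle-\langle N_2}{\langle(N_2-1)-N_2\rangle}$. It is worth noting that your values $\tau_{2,3}=1/2$, $\tau_{2,4}=4/3$, $\tau_{2,5}=9/4$ are the ones consistent with the closed form \eqref{eq: tau L N1 N2} and with the paper's later use of $\tau_{k+2}=k^2/(k+1)$ in the proof of Theorem~\ref{thm: 4}, whereas the numerical values printed in the paper's own proof of this lemma ($1/7$, $4/7$, $9/13$) appear to be miscalculated.
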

	The next four lemmas combine to take care of $L_{\leq k}$ for $k\geq 6$ and $L_{N_1,N_2}$ for $N_1\geq 3$.
	\begin{lemma}\label{lemma: tau B}
		For $N_1=2$ and any $N_2\geq 6$, 
		\begin{align*}
			f_0(0)>f_0(N_2-3),
		\end{align*}
		and for any $x\in[1,N_2-4]$,
		\begin{align*}
			f_0(x)\geq f_1(x).
		\end{align*}
	\end{lemma}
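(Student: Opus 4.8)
The plan is to first collapse $f_0$ and $f_1$ into elementary rational functions of a single parameter. Since $N_1=2$, one has $[\dot{2}]=1$, so the right chevron simplifies to $d\rangle=\tfrac{1}{d-1}$; and writing $c\coloneqq\langle N_2=[\dot{N_2}]=\tfrac{N_2-1}{N_2^2-N_2-1}$ (the fixed point of $t\mapsto \tfrac1{N_2}+\tfrac{t}{N_2(N_2-1)}$), the left chevron becomes $\langle d=\tfrac1d+\tfrac{c}{d(d-1)}$. Substituting $m\coloneqq N_1+x=2+x$, the two denominators coincide, $\langle m-(m+1)\rangle=\tfrac{c}{m(m-1)}$, and a short computation gives
\begin{align*}
    f_0(x)=\frac{(1-cm)(m-1)}{c}, && f_1(x)=\frac{1-c}{c}.
\end{align*}
In particular $f_1$ is constant. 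I would record these two identities as a preliminary computation; they are purely algebraic, using only $[\dot{2}]=1$ and the self-similarity defining $[\dot{N_2}]$.

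For the first assertion, after clearing $c>0$ the inequality $f_0(0)>f_0(N_2-3)$ compares the values of $g(m)\coloneqq(1-cm)(m-1)=-cm^2+(1+c)m-1$ at $m=2$ (which gives $f_0(0)$) and $m=N_2-1$ (which gives $f_0(N_2-3)$). Using $g(a)-g(b)=(a-b)\bigl(1+c-c(a+b)\bigr)$ with $a=2$, $b=N_2-1$, one obtains
\begin{align*}
    f_0(0)-f_0(N_2-3)=\frac{(3-N_2)(1-cN_2)}{c}.
\end{align*}
Then $3-N_2<0$ since $N_2\geq 6$, and $1-cN_2<0$ because $cN_2=\tfrac{N_2^2-N_2}{N_2^2-N_2-1}>1$; the product of two negatives divided by $c>0$ is positive.

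For the second assertion, $f_0(x)\geq f_1(x)$ on $x\in[1,N_2-4]$ translates, via $m=2+x$, into $h(m)\geq 0$ for $m\in[3,N_2-2]$, where $h(m)\coloneqq g(m)-(1-c)=-cm^2+(1+c)m+(c-2)$. The key point is that $h$ is strictly concave in $m$ (as $-2c<0$), so it is $\geq 0$ on an interval as soon as it is $\geq 0$ at both endpoints; this covers the whole real interval, hence in particular the integer values of $x$ needed later. At the endpoints,
\begin{align*}
    h(3)=1-5c=\frac{N_2^2-6N_2+4}{N_2^2-N_2-1}, && h(N_2-2)=(N_2-4)-c(N_2^2-5N_2+5)=\frac{N_2^2-7N_2+9}{N_2^2-N_2-1},
\end{align*}
and both numerators are strictly positive for every $N_2\geq 6$ (indeed $N_2^2-6N_2+4=N_2(N_2-6)+4>0$ and $N_2^2-7N_2+9\geq 3>0$ there), which completes the argument.

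The genuinely routine part is the chevron algebra producing the reduced forms of $f_0$ and $f_1$. The place where the hypothesis $N_2\geq 6$ is actually used is the pair of endpoint inequalities $N_2^2-6N_2+4>0$ and $N_2^2-7N_2+9>0$; the first already fails for $N_2=5$ (where $\langle N_2=\tfrac4{19}>\tfrac15$), which is precisely why that case is peeled off into Lemma~\ref{lemma: tau A}. I expect the only real bookkeeping hazard to be keeping the index shifts consistent --- translating $x\in[1,N_2-4]$ into $m\in[3,N_2-2]$ and confirming that these are exactly the endpoints the concavity argument should test.
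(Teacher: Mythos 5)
Your proof is correct and takes essentially the same route as the paper's: your $h(m)$ is exactly the quadratic numerator the paper calls $D(x)$ (after the shift $m=x+2$ and clearing the positive common denominator), and checking non-negativity at the endpoints of $[3,N_2-2]$ plus concavity is equivalent to the paper's explicit computation of the two roots of that quadratic, while your factorised evaluation of $f_0(0)-f_0(N_2-3)$ reproduces the paper's expanded cross-multiplication (both yield the sign of $N_2-3$). No gaps; the endpoint values $1-5c$ and $(N_2-4)-c(N_2^2-5N_2+5)$ and the reduced forms of $f_0,f_1$ all check out.
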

	\begin{lemma}\label{lemma: tau C}
		For any $N_1\geq3$, $N_2\geq N_1+2$ and $x\in[0,N_2-N_1-2]$,
		\begin{align*}
			f_0(x)\geq f_1(x).
		\end{align*}
	\end{lemma}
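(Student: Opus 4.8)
\emph{Proof proposal.} The plan is to exploit the fact that $f_0(x)$ and $f_1(x)$ share the same denominator, namely the gap length $\langle(N_1+x) - (N_1+x+1)\rangle$, which is strictly positive: the number $(N_1+x+1)\rangle$ is the largest L\"uroth element with first digit $N_1+x+1$, hence lies strictly below the smallest one $\langle(N_1+x)$ with first digit $N_1+x$ (a larger first digit produces a smaller value). Thus $f_0(x)\ge f_1(x)$ is equivalent to the corresponding inequality between numerators. Setting $m=N_1+x$, which runs over the real interval $[N_1,N_2-2]$ when $x\in[0,N_2-N_1-2]$, the claim becomes
\[
    \langle m - \langle N_2 \;\ge\; m\rangle - (m+1)\rangle .
\]
Both sides are positive here: the left because $m<N_2$, the right because first digit $m+1$ produces smaller numbers than first digit $m$.

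Next I would expand the four chevrons via their closed forms. A short computation gives $N_1\rangle=R_1$ and $\langle N_2=R_2$, where $R_1=\tfrac{N_1-1}{(N_1-1)N_1-1}$ and $R_2=\tfrac{N_2-1}{(N_2-1)N_2-1}$, and then, using the telescoping identity $\tfrac1m-\tfrac1{m(m+1)}=\tfrac1{m+1}$, the displayed inequality rearranges to
\[
    \frac{1}{m+1}-\frac{1}{N_2}+R_2\left(\frac{1}{m(m-1)}-\frac{1}{N_2(N_2-1)}\right)
    \;\ge\;
    \frac{2R_1}{(m-1)m(m+1)} .
\]
Since $m+1\le N_2-1$ and $m<N_2$, both $\tfrac1{m+1}-\tfrac1{N_2}$ and the bracket multiplying $R_2$ are positive, so it suffices to prove the stronger inequality obtained by discarding the $R_2$-term. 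Moreover $2R_1<1$ for $N_1\ge 3$ (this is exactly $N_1^2-3N_1+1>0$), so it is enough to establish the purely polynomial statement
\[
    \frac{1}{m+1}-\frac{1}{N_2}\;\ge\;\frac{1}{(m-1)m(m+1)}
    \qquad\Longleftrightarrow\qquad
    (N_2-m-1)(m-1)m\;\ge\;N_2
\]
for all real $m\in[N_1,N_2-2]$.

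To finish, write $\psi(m)=(N_2-m-1)(m-1)m=-m^3+N_2m^2-(N_2-1)m$. Then $\psi'(m)=-3m^2+2N_2m-(N_2-1)$ is a downward parabola with $\psi'(1)=N_2-2>0$, so on $[N_1,N_2-2]\subseteq(1,\infty)$ the function $\psi$ is increasing and then possibly decreasing; in particular its minimum on the closed interval is attained at an endpoint. It therefore remains to check two inequalities. At $m=N_2-2$, $\psi(N_2-2)=(N_2-3)(N_2-2)\ge N_2$ reduces to $N_2^2-6N_2+6\ge 0$, which holds since $N_2\ge N_1+2\ge 5$. At $m=N_1$, writing $k=N_2-N_1-1\ge 1$, the inequality $\psi(N_1)=k(N_1-1)N_1\ge N_2=N_1+k+1$ rearranges to $k\bigl((N_1-1)N_1-1\bigr)\ge N_1+1$, which follows from $k\ge1$ together with $(N_1-1)N_1-1\ge N_1+1$ (i.e.\ $N_1^2-2N_1-2\ge0$), valid for $N_1\ge3$.

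I expect the difficulty to be bookkeeping rather than conceptual. The two places requiring care are: (i) $m$ ranges over a continuum, so the reduction to the two endpoint cases must come from the sign analysis of $\psi'$ rather than a finite check; and (ii) one must make sure the two crude simplifications — dropping the $R_2$-term and replacing $2R_1$ by $1$ — still leave enough room at the tight endpoint $m=N_2-2$. A spot check of the extreme case $(N_1,N_2)=(3,5)$, where the original inequality reads $\tfrac3{19}\ge\tfrac7{60}$ and the weakened polynomial form reads $6\ge5$, shows that the slack is comfortable.
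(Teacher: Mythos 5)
Your proof is correct, and while it shares the paper's skeleton -- both arguments observe that $f_0$ and $f_1$ have the same positive denominator and therefore reduce the lemma to the positivity of a cubic in $x$ (equivalently in $m=N_1+x$) -- the way you handle that cubic is genuinely different and considerably lighter. The paper expands the numerator difference $D(x)$ exactly, obtaining a cubic $E(x)=c_0+c_1x+c_2x^2+c_3x^3$ whose coefficients are degree-five polynomials in $N_1,N_2$, and then locates all three roots of $E$ by sign checks at $-\infty$, $-N_1$, $0$, $N_2-N_1-2$ and $+\infty$. You instead discard two provably positive contributions (the $R_2$-bracket, and the slack $1-2R_1>0$ coming from $N_1^2-3N_1+1>0$) to arrive at the clean factored inequality $(N_2-m-1)(m-1)m\geq N_2$, which you then verify on the whole real interval $[N_1,N_2-2]$ by noting that $\psi'$ is a downward parabola with $\psi'(1)=N_2-2>0$, so the minimum of $\psi$ is attained at an endpoint, and both endpoint checks reduce to the quadratic facts $N_2^2-6N_2+6\geq0$ (for $N_2\geq5$) and $N_1^2-2N_1-2\geq0$ (for $N_1\geq3$). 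I verified the key algebraic steps: the identity $R_2=\tfrac{1}{N_2}+\tfrac{R_2}{N_2(N_2-1)}$ that produces your rearranged form, the equivalence with the polynomial statement, and the two endpoint evaluations; the hypotheses $N_1\geq3$ and $N_2\geq N_1+2$ are used exactly where needed, and the reduction correctly treats $m$ as a real variable. What your route buys is the avoidance of the paper's heavy coefficient bookkeeping; what it gives up is only the exact root structure of $E$, which the paper does not use elsewhere.
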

	\begin{lemma}\label{lemma: tau D}
		For any $N_1,N_2\in\mathbb{N}\setminus\{1\}$, if $N_1<N_2$ then $f_1$ is strictly decreasing.
	\end{lemma}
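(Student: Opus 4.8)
The plan is to make $f_1$ completely explicit and then read off monotonicity, using nothing more than the fact that $u\mapsto 1/u$ is decreasing on $(0,\infty)$. First I would record closed forms for the chevrons: since the purely periodic L\"uroth number with constant digit $N$ satisfies $[\dot N]=\frac{N-1}{N(N-1)-1}$, abbreviating $c_1:=[\dot{N_1}]$ and $c_2:=[\dot{N_2}]$ gives $\langle d=\frac1d+\frac{c_2}{d(d-1)}$ and $d\rangle=\frac1d+\frac{c_1}{d(d-1)}$ for every integer $d\ge 2$. I would then note two elementary facts about these constants: (i) $c_1>c_2$, because $N\mapsto[\dot N]$ is strictly decreasing (a one-line cross-multiplication), whence $0<c_1-c_2<1$ and $1+c_2-c_1\in(0,1)$; and (ii) $c_1=[\dot{N_1}]\le[\dot 2]=1$, with $c_1-1=\frac{-N_1(N_1-2)}{N_1^2-N_1-1}\le 0$.

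Next I would substitute $t=N_1+x$, so that $f_1(x)=\dfrac{t\rangle-\langle t}{\langle t-(t+1)\rangle}$, and simplify numerator and denominator separately. The numerator telescopes at once, $t\rangle-\langle t=\frac{c_1-c_2}{t(t-1)}$, and putting the denominator over the common denominator $t(t-1)(t+1)$ and expanding yields $\langle t-(t+1)\rangle=\frac{(1+c_2-c_1)t+(c_1+c_2-1)}{t(t-1)(t+1)}$, so that
\[
    f_1(x)=\frac{(c_1-c_2)(t+1)}{(1+c_2-c_1)t+(c_1+c_2-1)},\qquad t=N_1+x.
\]
Rewriting the denominator as $(1+c_2-c_1)(t+1)+2(c_1-1)$ and dividing through by $t+1$ turns this into
\[
    f_1(x)=\frac{c_1-c_2}{\,(1+c_2-c_1)+\dfrac{2(c_1-1)}{N_1+x+1}\,}.
\]

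From this form the conclusion is immediate: the numerator is a fixed positive constant, and in the denominator $1+c_2-c_1$ is a fixed positive constant while $2(c_1-1)\le 0$, so the term $\frac{2(c_1-1)}{N_1+x+1}$ is non-positive and increasing in $x$ (a non-positive number over a strictly increasing positive quantity); hence the denominator is increasing and $f_1$ is decreasing. I would also check that $f_1$ is genuinely defined on all of $[0,\infty)$, i.e. that $(1+c_2-c_1)t+(c_1+c_2-1)$ does not vanish for $t\ge N_1\ge 2$: at $t=2$ it equals $1+3c_2-c_1\ge 3c_2>0$ and its slope $1+c_2-c_1$ is positive, so it stays positive (equivalently, it is $t(t-1)(t+1)$ times the length of a genuine SCC gap $\langle(N_1+i-1)-(N_1+i)\rangle$, positive by Proposition~\ref{prop: Stepwise Complete g h}). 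There is no real obstacle beyond this bookkeeping; the only point needing care is the boundary case $N_1=2$, where $c_1=1$ forces $2(c_1-1)=0$ and $f_1\equiv\frac{c_1-c_2}{1+c_2-c_1}$ is constant — so the strict conclusion holds precisely for $N_1\ge 3$, which is the only range in which Lemma~\ref{lemma: tau D} is actually used (for $N_1=2$ the thickness is obtained from Lemmas~\ref{lemma: tau A} and~\ref{lemma: tau B}).
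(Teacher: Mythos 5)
Your computation is correct and, after the substitution $t=N_1+x$, your closed form
\[
f_1(x)=\frac{c_1-c_2}{(1+c_2-c_1)+\dfrac{2(c_1-1)}{N_1+x+1}}
\]
is algebraically identical to the expression the paper itself derives, namely $f_1(x)=\bigl(N_1\rangle-\langle N_2\bigr)\big/\bigl((1-N_1\rangle)\tfrac{N_1+x-1}{N_1+x+1}+\langle N_2\bigr)$, so the two arguments follow the same route. The added value of your version is that you track the sign of the coefficient $2(c_1-1)=-2(1-N_1\rangle)$, and this catches a genuine defect in the lemma as stated: for $N_1=2$ one has $N_1\rangle=[\dot{2}]=1$, the coefficient vanishes, the denominator reduces to the constant $\langle N_2$, and $f_1$ is constant rather than strictly decreasing (for $L_4$, for instance, $f_1(0)=f_1(1)=8/3$). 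The paper's proof silently assumes $1-N_1\rangle>0$ when passing from the monotonicity of $(N_1+x-1)/(N_1+x+1)$ to that of the denominator. One correction to your closing remark, though: the lemma \emph{is} invoked for $N_1=2$, in the chain of Lemmas~\ref{lemma: tau B}, \ref{lemma: tau D} and~\ref{lemma: tau E} used to compute $\tau_k$ for $k\geq 6$; it is not only Lemmas~\ref{lemma: tau A} and~\ref{lemma: tau B} that handle $N_1=2$. That chain, however, only needs $f_1(N_2-3)\leq f_1(x)$ for all $x\leq N_2-3$, i.e.\ that $f_1$ is non-increasing, which a constant function satisfies; so Proposition~\ref{prop: Stepwise Complete tau} is unaffected, and the lemma should simply be restated as ``non-increasing'' (or the strict claim restricted to $N_1\geq 3$). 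Your verification that the denominator never vanishes for $t\geq N_1\geq 2$ is a worthwhile piece of bookkeeping the paper omits.
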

	\begin{lemma}\label{lemma: tau E}
		For any $N_1,N_2\in\mathbb{N}\setminus\{1\}$, if $N_1<N_2$ then
		\begin{align*}
			f_0(N_2-N_1-1)<f_1(N_2-N_1-1).
		\end{align*}
	\end{lemma}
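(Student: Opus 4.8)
The plan is to notice that Lemma~\ref{lemma: tau E} collapses to an elementary comparison once one evaluates $f_0$ and $f_1$ at the last step and observes that they share a denominator. Setting $x = N_2 - N_1 - 1$ we have $N_1 + x + 1 = N_2$ and $N_1 + x = N_2 - 1$, so by the definitions of $f_0$ and $f_1$ (and the precedence convention for chevrons),
\[
    f_0(N_2-N_1-1) = \frac{N_2\rangle - \langle N_2}{\langle(N_2-1) - N_2\rangle}, \qquad f_1(N_2-N_1-1) = \frac{(N_2-1)\rangle - \langle(N_2-1)}{\langle(N_2-1) - N_2\rangle}.
\]
The common denominator $\langle(N_2-1) - N_2\rangle$ is the gap length from \eqref{eq: SCC gap length} with $i = N_2 - N_1$, hence strictly positive (it is a genuine gap in the SCC; cf.\ Proposition~\ref{prop: Stepwise Complete g h}). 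Clearing it, the claimed inequality becomes the numerator comparison $N_2\rangle - \langle N_2 < (N_2-1)\rangle - \langle(N_2-1)$.

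Next I would compute, directly from the definitions of the chevrons, that for every integer $d \geq 2$,
\[
    d\rangle - \langle d = \frac{1}{d(d-1)}\left(\frac{N_1-1}{(N_1-1)N_1-1} - \frac{N_2-1}{(N_2-1)N_2-1}\right) = \frac{c}{d(d-1)},
\]
where $c$ denotes the bracketed constant. Since the map $t \mapsto (t-1)/(t^2-t-1)$ is strictly decreasing on $[2,\infty)$ — its derivative has numerator $-((t-1)^2+1) < 0$ — and $2 \le N_1 < N_2$, we get $c > 0$; therefore $d \mapsto c/(d(d-1))$ is strictly decreasing in $d$. Because $N_1 < N_2$ forces $N_2 - 1 \geq 2$, applying this with $d = N_2$ and $d = N_2 - 1$ gives $N_2\rangle - \langle N_2 = c/(N_2(N_2-1)) < c/((N_2-1)(N_2-2)) = (N_2-1)\rangle - \langle(N_2-1)$, which is exactly the required numerator inequality.

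There is no real obstacle here; the statement is essentially immediate once the shared denominator is spotted. The only points needing care are (i) parsing the chevron/precedence conventions so that the denominators of $f_0$ and $f_1$ genuinely cancel at the last step, and (ii) the elementary monotonicity of $t \mapsto (t-1)/(t^2-t-1)$, which drives both $c > 0$ and the decrease of the interval lengths $d\rangle - \langle d$. Geometrically this is just the observation, visible in Figure~\ref{fig: L_3 L_4 L_5}, that at each level of the SCC the retained interval $[\langle d, d\rangle]$ shrinks as $d$ increases, so at the final level the left retained interval (of width $N_2\rangle - \langle N_2$) is strictly narrower than the right one (of width $(N_2-1)\rangle - \langle(N_2-1)$), forcing $f_0 < f_1$ there.
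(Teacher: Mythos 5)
Your proof is correct and follows essentially the same route as the paper's: both reduce the claim to comparing the numerators of $f_0$ and $f_1$ over their common (positive) denominator $\langle(N_2-1)-N_2\rangle$ at $x=N_2-N_1-1$. The only difference is in the finish — where the paper expands the numerator difference $D(N_2-N_1-1)$ into an explicit rational function and checks its sign, you use the identity $d\rangle-\langle d=(N_1\rangle-\langle N_2)/(d(d-1))$ (which the paper also derives) together with monotonicity in $d$, a cleaner but equivalent verification.
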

	Lemmas~\ref{lemma: tau B}, \ref{lemma: tau D}, \ref{lemma: tau E} work together to find the thickness for $L_{\leq k}$ for $k\geq 6$; Lemmas~\ref{lemma: tau C}, \ref{lemma: tau D}, \ref{lemma: tau E} work together to find the thickness of $L_{N_1,N_2}$ for $N_1\geq 3$. All of that combined proves Proposition~\ref{prop: Stepwise Complete tau}.
	
	\begin{proof}[Proof of Lemma~\ref{lemma: tau A}]
		We calculate the exact values of $\tau_3$, $\tau_4$ and $\tau_5$. Based on the SCC for $L_{\leq 3}$, $L_{\leq 4}$ and $L_{\leq 5}$, we have
		\begin{align*}
			\tau_3
			&= \min\left\{ \frac{3\rangle-\langle3}{\langle2 -3\rangle},\frac{2\rangle-\langle 2}{\langle2-3\rangle}\right\}
			&&=\frac{1}{7}=\frac{3\rangle-\langle 3}{\langle 2-3\rangle}; \\
			\tau_4
			&= \min\left\{\frac{3\rangle-\langle4}{\langle 2-3\rangle}, \frac{2\rangle-\langle2}{\langle 2-3\rangle},\frac{4\rangle-\langle 4}{\langle 3-4\rangle},\frac{3\rangle-\langle 3}{\langle3-4\rangle}\right\}
			&&=\frac{4}{7}=\frac{4\rangle-\langle 4}{\langle 3-4\rangle}; \\
			\tau_5
			&= \min\left\{\frac{3\rangle-\langle5}{\langle 2-3\rangle},\frac{2\rangle-\langle2}{\langle 2-3\rangle}, \frac{4\rangle-\langle 5}{\langle3-4\rangle},\frac{3\rangle-\langle 3}{\langle3-4\rangle}, \frac{5\rangle-\langle 5}{\langle4-5\rangle},\frac{4\rangle-\langle 4}{\langle4-5\rangle} \right\} 
			&&=\frac{9}{13}=\frac{5\rangle-\langle5}{\langle 4- 5\rangle}.
		\end{align*}
		The result follows.
	\end{proof}
	
	In Lemmas \ref{lemma: tau B}, \ref{lemma: tau C} and \ref{lemma: tau E}, we are comparing $f_0$ and $f_1$, so let us look at the numerators.
	We have, for any $x\geq0$,
	\begin{align*}
		N_1+x+1\rangle-\langle N_2
		&=\frac{1}{N_1+x+1}+\frac{1}{(N_1+x+1)(N_1+x)}N_1\rangle-\langle N_2 \\
		&=\frac{1}{N_1+x+1}\left(1+\frac{N_1\rangle}{N_1+x}\right)-\langle N_2
		;
	\end{align*}
	and
	\begin{align*}
		N_1+&x\rangle -\langle N_1+x \\
		&=\frac{1}{N_1+x}+\frac{1}{(N_1+x)(N_1+x-1)}N_1\rangle-\frac{1}{N_1+x}-\frac{1}{(N_1+x)(N_1+x-1)}\langle N_2 \\
		&=\frac{N_1\rangle}{(N_1+x)(N_1+x-1)}-\frac{\langle N_2}{(N_1+x)(N_1+x-1)}\\
		&=\frac{N_1\rangle-\langle N_2}{(N_1+x)(N_1+x-1)}.
	\end{align*}
	Define $D:[0,+\infty)\to\mathbb{R}$ to be the difference of the numerators of $f_0$ and $f_1$, that is for any $x\geq0$,
	\begin{align}
		\label{eq: def D}
		D(x)
		&\coloneqq \frac{1}{N_1+x+1}\left(1+\frac{N_1\rangle}{N_1+x}\right)-\langle N_2-\frac{N_1\rangle-\langle N_2}{(N_1+x)(N_1+x-1)}.
	\end{align}
	\begin{proof}[Proof of Lemma~\ref{lemma: tau B}]
		Suppose $N_1=2$ and $N_2\geq6$. To prove $f_0(0)-f_0(N_2-3) > 0$, it suffices to show that
		\begin{align*}
			(3\rangle-\langle N_2)(\langle (N_2-1)-N_2\rangle)- (N_2\rangle-\langle N_2)(\langle 2-3\rangle) > 0
		\end{align*}
		Writing out the full expression of the left-hand side above yields
		\begin{align*}
			&\left( \frac13 +\frac16-\frac{N_2-1}{{N_2}^2-N_2-1} \right) \left( \frac{1}{N_2-1}+\frac{1}{(N_2-1)(N_2-2)}\frac{N_2-1}{{N_2}^2-N_2-1}-\frac{1}{N_2}-\frac{1}{N_2(N_2-1)}\right) \\
			&\qquad-\left(  \frac{1}{N_2}+\frac{1}{N_2(N_2-1)}-\frac{N_2-1}{{N_2}^2-N_2-1} \right) \left( \frac12+\frac12 \frac{N_2-1}{{N_2}^2-N_2-1} -\frac13-\frac16 \right) \\
			&= \frac{{N_2}^2-3N_2+1}{2({N_2}^2-N_2-1)}\frac{1}{(N_2-2)({N_2}^2-N_2-1)}-\frac{N_2-2}{(N_2-1)({N_2}^2-N_2-1)}\frac{N_2-1}{2({N_2}^2-N_2-1)} \\
			&=\frac{N_2-3}{2(N_2-2)({N_2}^2-N_2-1)^2}>0,
		\end{align*}
		which is positive since we assumed $N_2\geq 6$.
		
		For the second claim we show that for any $1\leq x\leq N_2-4$, $D(x)\geq 0$. To see this, we have 
		\begin{align*}
			D(x)
			&=\frac{1}{N_1+x+1}\left(1+\frac{N_1\rangle}{N_1+x}\right)-\langle N_2-\frac{N_1\rangle-\langle N_2}{(N_1+x)(N_1+x-1)} \\
			&=\frac{1}{2+x}-\langle N_2-\frac{1-\langle N_2}{(2+x)(1+x)} \\
			&=\frac{ -x^2\langle N_2 +x(1-3\langle N_2)  \langle N_2}{(1+x)(2+x)}
			,
		\end{align*}
		where the numerator is non-negative if and only if
		\begin{align}
			\label{eq: D positive equivalent}
			\frac{(1-3\langle N_2)-\sqrt{(1-3\langle N_2)^2-4(\langle N_2)^2}}{2\langle N_2}
			\leq x \leq
			\frac{(1-3\langle N_2)+\sqrt{(1-3\langle N_2)^2-4(\langle N_2)^2}}{2\langle N_2}.
		\end{align}
		Notice that for any $N_2 \geq 6$,
		\begin{align*}
			\Delta_{N_2,0}
			&\coloneqq 1-\frac{(1-3\langle N_2)-\sqrt{(1-3\langle N_2)^2-4(\langle N_2)^2}}{2\langle N_2} \\
			&=\frac{-4+6{N_2}-{N_2}^{2}+\sqrt{\left(-4+6{N_2}-{N_2}^{2}\right)^2+4{N_2}^3-28{N_2}^2+40{N_2}-16}}{2({N_2}-1)}
			\geq0,
		\end{align*}
		and
		\begin{align*}
			\Delta_{N_2,1}
			&\coloneqq \frac{(1-3\langle N_2)+\sqrt{(1-3\langle N_2)^2-4(\langle N_2)^2}}{2\langle N_2}-(N_2-4) \\
			&=\frac{-6+6{N_2}-{N_2}^{2}+\sqrt{\left(-6+6{N_2}-{N_2}^{2}\right)^2+4{N_2}^3-32{N_2}^2+64{N_2}-36}}{2({N_2}-1)}
			\geq0.
		\end{align*}
		Thus, \eqref{eq: D positive equivalent} holds for all $1\leq x\leq N_2-4$, which also completes the proof.
	\end{proof}
	
	Now we handle the cases when $N_1$ is large.
	\begin{proof}[Proof of Lemma~\ref{lemma: tau C}]
		Suppose $N_1\geq 3$ and $N_2-N_1-2\geq0$. For any $x\geq0$,
		\begin{align*}
			D(x)=\frac{c_0+c_1x+c_2x^2+c_3x^3}{C_{N_1,N_2}(N_1+x-1)(N_1+x)(N_1+x+1)},
		\end{align*}
		where the coefficients $c_0,c_1,c_2,c_3$ and the constant $C_{N_1,N_2}>0$ are in terms of $N_1$ and $N_2$,
		\begin{align*}
			c_0&=-{N_1}^5{N_2}+{N_1}^5+{N_1}^4 {N_2}^2-2 {N_1}^4-2 {N_1}^3 {N_2}^2+5 {N_1}^3 {N_2}-{N_1}^3-{N_1}^2 {N_2}+{N_1}^2-{N_1} {N_2}^2\\
			&\qquad-2 {N_1} {N_2}+4 {N_1}+2 {N_2}^2-3 {N_2}-1, \\
			c_1&=-3 {N_1}^4 {N_2}+3 {N_1}^4+2 {N_1}^3 {N_2}^2+{N_1}^3 {N_2}-5 {N_1}^3-3 {N_1}^2 {N_2}^2+8 {N_1}^2 {N_2}-2 {N_1}^2-{N_1} {N_2}^2 \\
			&\qquad-{N_1} {N_2}+3 {N_1}+{N_2}^2-3 {N_2}+1, \\
			c_2&=-3 {N_1}^3 {N_2}+3 {N_1}^3+{N_1}^2 {N_2}^2+2 {N_1}^2 {N_2}-4 {N_1}^2-{N_1} {N_2}^2+4 {N_1} {N_2}-2 {N_1}-{N_2}^2+{N_2}+1, \\
			c_3
			&=-{N_1}^2{N_2}+{N_1}^2+{N_1} {N_2}-{N_1}+{N_2}-1 \\
			&=-({N_1}^2-N_1-1)(N_2-1)<0, \\
			C_{N_1,N_2}
			&= ({N_1}^2-N_1-1)({N_2}^2-N_2-1).
		\end{align*}
		Define $E:\mathbb{R}\to\mathbb{R}$ to be the cubic equation in the numerator of $D$, that is for any $x\in\mathbb{R}$,
		\begin{align*}
			E(x)\coloneqq{c_0+c_1x+c_2x^2+c_3x^3}.
		\end{align*}
		It now suffices to prove the following properties for $E$:
		\begin{enumerate}[A.]
			\item $c_3<0$; hence $\lim_{x\to-\infty} E(x) =+\infty$ and $\lim_{x\to+\infty} E(x) =-\infty$;
			\item $E(-N_1)<0$.
			\item $E(0)>0$;
			\item $E(N_2-N_1-2)>0$;
		\end{enumerate}
		Indeed the above implies that the cubic polynomial $E$ has two negative roots and one positive root that is strictly greater than $N_2-N_1-2$, thus for any $x\in[0,N_2-N_1-1]$, $E(x)>0$ and consequently $D(x)>0$.
		
		\begin{enumerate}[A.]
			\item It is clear from the definition of the coefficient $c_3$ above. 
			
			\item 
			By taking $N_2\geq N_1+2$ we have,
			\begin{align*}
				E(-N_1)
				&=c_0-c_1{N_1}+c_2{N_1}^2-c_3{N_1}^3 \\
				&=\left(-1+3{N_1}-{N_1}^2\right)+\left(-3+{N_1}+{N_1}^2\right){N_2}+\left(2-2{N_1}\right){N_2}^2 \\
				&\leq \left(-1+3{N_1}-{N_1}^2\right)+\left(1-{N_1}-{N_1}^2\right)N_2 \\
				&\leq 1+2{N_1}-4{N_1}^2-{N_1}^3 \\
				&<0.
			\end{align*}
			
			\item 
			$E(0)$ is given by
			\begin{align*}
				E(0)
				&= c_0\\
				&=-{N_1}^5N_2+{N_1}^5+{N_1}^4{N_2}^2-2{N_1}^4-2{N_1}^3{N_2}^2+5{N_1}^3N_2-{N_1}^3 \\
				&\qquad-{N_1}^2N_2+{N_1}^2-N_1{N_2}^2-2N_1N_2+4N_1+2{N_2}^2-3N_2-1 \\
				&=
				{N_2}^2({N_1}^4-2{N_1}^3-N_1+2)+N_2(-{N_1}^5+5{N_1}^3-{N_1}^2-2N_1-3)\\
				&\qquad+({N_1}^5-2{N_1}^4-{N_1}^3+{N_1}^2+4N_1-1).
			\end{align*}
			Suppose $N_1\geq 3$ and $N_2 \geq N_1+2$. the constant term and the coefficients of ${N_2}$ and ${N_2}^2$ are positive, negative and positive respectively. Thus, we have
			\begin{align*}
				c_0 &\geq {N_2}^2\left({N_1}^4-2{N_1}^3-N_1+2\right)+N_2\left(-{N_1}^5+5{N_1}^3-{N_1}^2-2N_1-3\right) \\
				&= N_2\left(N_2\left({N_1}^4-2{N_1}^3-N_1+2\right)+\left(-{N_1}^5+5{N_1}^3-{N_1}^2-2N_1-3\right)\right) \\
				&\geq  (N_1+2)\left({N_1}^4-2{N_1}^3-N_1+2\right)+\left(-{N_1}^5+5{N_1}^3-{N_1}^2-2N_1-3\right) \\
				&= (N_1+1)\left({N_1}^2-3N_1+1\right) \\
				&>0.
			\end{align*}
			
			\item 
			Suppose $N_1\geq3$ and $N_2\geq N_1+2\geq5$. Calculating $E(N_2-N_1-2)$ we get
			{
				\begin{align*}
					E(N_2-N_1-2)
					&=c_0+c_1(N_2-N_1-2)+c_2(N_2-N_1-2)^2+c_3(N_2-N_1-2)^3 \\
					&=(9-16{N_2}+8{N_2}^2-{N_2}^3)\\
					&\qquad+(13-12{N_2}+4{N_2}^2-{N_2}^3){N_1}+(-11+14{N_2}-6{N_2}^2+{N_2}^3){N_1}^2
				\end{align*}
			}
			Now we consider the polynomial below for $x\geq3$ and $y\geq5$ 
			\begin{align*}
				P(x, y) = a_y x^2+b_y x+c_y,
			\end{align*}
			where
			\begin{align*}
				a(y) = -11+14y-6y^2+y^3, &&
				b(y) = 13-12y+4y^2-y^3, &&
				c(y) = 9-16y+8y^2-y^3.
			\end{align*}
			Note that $a$ is increasing and that for $y\geq5$,
			\begin{align*}
				a(y)=(y-2)^3+2(y-2)+1\geq34.
			\end{align*}
			Thus, for any fixed $y\geq5$, $P(x, y)$ is quadratic in $x $ and forms an upward parabola. The partial derivative of $P$ with respect to $x$ is given by
			\begin{align*}
				\frac{\partial P}{\partial x}(x,y)&=2a(y)x+b(y).
			\end{align*}
			Evaluating at $x=3$, we see that
			\begin{align*}
				\frac{\partial P}{\partial x}(3,y)
				&=5y^3-32y^2+72y-53 \\
				&=5\left(y-\frac{32}{15}\right)^3+\frac{56}{15}\left(y-\frac{32}{15}\right)+\frac{2369}{675}>0.
			\end{align*}
			This tells us that for fixed $y\geq 5$, $P(x,y)$ is increasing on $x\in[3,\infty)$. Hence, for any $x\geq3$ and $y\geq5$
			\begin{align*}
				P(x,y)
				&\geq P(3,y) \\
				&=(y-3) (5 y^2-19 y+17) \\
				&=(y-3)\left(5 (y-5)^2+31 (y-5)+47\right) \\
				&>0.
			\end{align*}
			
		\end{enumerate}
	\end{proof}

	\begin{proof}[Proof of Lemma~\ref{lemma: tau D}]
		Let $N_1,N_2\in\mathbb{N}\setminus\{1\}$ with $N_1<N_2$. To show that $f_1$ is decreasing, we write for any $x\geq 0$,
		\begin{align*}
			f_1(x)
			&=\frac{N_1+x\rangle-\langle N_1+x}{\langle N_1+x-N_1+x+1\rangle}
			=\frac{N_1\rangle-\langle N_2}{\displaystyle
				(1-N_1\rangle)\frac{N_1+x-1}{N_1+x+1}+\langle N_2}
			,
		\end{align*}
		it remains to notice that the following expression is strictly increasing on $[0,+\infty)$ with respect to $x$:
		\begin{align*}
			\frac{N_1+x-1}{N_1+x+1}=1-\frac{2}{{N_1+x+1}}.
		\end{align*}
	\end{proof}
	
	\begin{proof}[Proof of Lemma~\ref{lemma: tau E}]
		It suffices to prove that $D(N_2-N_1-1)\leq0$. By $2\leq N_1<N_2$, we have
		\begin{align*}
			D(N_2-N_1-1)
			&=\frac{1}{N_2}\left(1+\frac{N_1\rangle}{N_2-1}\right)-\langle N_2-\frac{N_1\rangle-\langle N_2}{(N_2-1)(N_2-2)} \\
			&=\frac{4N_1-2{N_1}^{2}-4N_2+2{N_1}^{2}N_2+2{N_2}^{2}-2N_1{N_2}^{2}}{({N_1}^2-{N_1}-1){N_2}({N_2}-1)({N_2}-2)({N_2}^2-{N_2}-1)} \\
			&<\frac{(N_1-N_2)(N_1N_2-N_1-N_2)}{({N_1}^2-{N_1}-1){N_2}({N_2}-1)({N_2}-2)({N_2}^2-{N_2}-1)} \\
			&<0
			.
		\end{align*}
		Hence, $f_0(N_2-N_1-1) \leq f_1(N_2-N_1-1)$, which completes the proof of Lemma \ref{lemma: tau E}, which in turn completes the proof of Proposition \ref{prop: Stepwise Complete tau}.
	\end{proof}
	
	\section{Proofs on Congruence}\label{sectproofs}
	In this section, we prove the main results on whether sums of L\"uroth sets are congruent to $\mathbb{R}$ mod 1, namely Theorems~\ref{thm: 1},~\ref{thm: 2}, Corollary~\ref{coro: 3}, and Theorem~\ref{thm: 4}.
	
	\subsection{Proof of Theorem~\ref{thm: 1}}\label{subsectproof1}
	In Section \ref{sect: Constructions}, we construct the set $L_{\leq 3}$ under the SCC and we have that,
	\begin{align*}
		g_3
		&= \frac{\langle2-3\rangle}{2\rangle-\langle3}=\frac{7/10-1/2}{1-2/5}
		&&= \frac13; \\
		h_3
		&=\min{\left\{\frac{3\rangle-\langle3}{2\rangle-\langle3},\frac{2\rangle-\langle2}{2\rangle-\langle3}\right\}}
		=\min{\left\{\frac{1/2-2/5}{1-2/5},\frac{1-7/10}{1-2/5}\right\}}
		&&= \frac{1}{6}.
	\end{align*}
	Similarly calculating $g$ and $h$ for $L_{\leq 4}$ and $L_{\leq 5}$ yields that,
	\begin{align*}
		g_4
		= \frac{\langle3-4\rangle}{3\rangle-\langle4}
		= \frac{1}{5}; &&
		h_4
		= \frac{\langle4-4\rangle}{3\rangle-\langle4}
		= \frac{4}{15}; \\
		g_5
		= \frac{\langle4-5\rangle}{4\rangle-\langle5}
		= \frac{1}{7} ; &&
		h_5
		= \frac{\langle5-5\rangle}{4\rangle-\langle5}
		= \frac{9}{28}.
	\end{align*}
	To complete the proof, we apply Propositions~\ref{prop: hlavka3} and \ref{prop: hlavka10}.
	
	To prove that $L_{\leq 3}+L_{\leq 5}\equiv\mathbb{R}\mod1$, we notice that $\lvert I_3 \rvert = 3/5$ and $\lvert I_5\rvert = 15/19$. We verify the three conditions in Proposition~\ref{prop: hlavka3}:
	\begin{align*}
		g_3 g_5
		= \frac{1}{21}
		\leq \frac{3}{56}
		= h_3h_5, &&
		g_3 \lvert I_3 \rvert
		= \frac{1}{5}
		\leq \frac{15}{19}
		= \lvert I_5 \rvert,  &&
		g_5 \lvert I_5 \rvert 
		= \frac{15}{133}
		\leq \frac{3}{5}
		= \lvert I_3 \rvert.
	\end{align*}
	By Proposition~\ref{prop: hlavka3}, we see that $L_{\leq 3}+L_{\leq 5}$ is an interval of length $132/95\geq1$, as
	\begin{align*}
		L_{\leq 3}+L_{\leq 5}
		= I_3+I_5
		= \left[\frac{2}{5},1\right]+\left[\frac{4}{19},1\right] = \left[\frac{58}{95},2\right].
	\end{align*}
	
	To prove that $L_{\leq 4}+L_{\leq 4}\equiv\mathbb{R}\mod1$, we notice that $\lvert I_4 \rvert = 8/11$. We verify the conditions in Proposition~\ref{prop: hlavka3}:
	\begin{align*}
		g_4 g_4
		= \frac{1}{25}
		\leq \frac{16}{225}
		= h_4h_4, &&
		g_4 \lvert I_4 \rvert 
		= \frac{8}{55}
		\leq \frac{8}{11}
		= \lvert I_4 \rvert.
	\end{align*}
	By Proposition~\ref{prop: hlavka3}, we see that $L_{\leq 4}+L_{\leq 4}$ is an interval of length $16/11\geq1$, as
	\begin{align*}
		L_{\leq 4}+L_{\leq 4}
		= I_4+I_4
		= \left[\frac{3}{11},1\right]+\left[\frac{3}{11},1\right] = \left[\frac{6}{11},2\right].
	\end{align*}
	
	To prove that $L_{\leq 3}+L_{\leq 3}+L_{\leq 3}\equiv\mathbb{R}\mod1$, we apply Proposition~\ref{prop: hlavka10} with $g_3$ and $h_3$ obtained above. The first condition is clearly satisfied since all the $I_{\mathcal{A}}$'s are the same and $h$ is always not greater than 1. For the second condition, we have
	\begin{align*}
		g_3+h_3
		=3h_3
		=\frac{1}{2}.
	\end{align*}
	Proposition~\ref{prop: hlavka10} then tells us that $L_{\leq 3}+L_{\leq 3}+L_{\leq 3}$ is an interval of length $9/5\geq 1$, namely
	\begin{align*}
		L_{\leq 3}+L_{\leq 3}+L_{\leq 3}
		=I_3+I_3+I_3
		= \left[\frac{2}{5},1\right]+\left[\frac{2}{5},1\right]+\left[\frac{2}{5},1\right]
		= \left[\frac{6}{5},3\right].
	\end{align*}
	
	Next, we prove $L_{\leq 3}+L_{\leq 3}\not\equiv\mathbb{R}\mod1$. Notice that at Level 1 of SCC for $L_{\leq 3}$, we have
	\begin{align*}
		L_{\leq 3}
		\subset[\langle3,3\rangle]\cup[\langle2,2\rangle]
		=\left[\frac{2}{5},\frac{1}{2}\right]\cup\left[\frac{7}{10},1\right].
	\end{align*}
	Thus, we have
	\begin{align*}
		L_{\leq 3}+L_{\leq 3}\subset \left[\frac{4}{5},1\right]\cup\left[\frac{11}{10},2\right];
	\end{align*}
	hence,
	\begin{align*}
		L_{\leq 3}+L_{\leq 3}+\mathbb{Z}\subset\left[\frac{1}{10},1\right]+\mathbb{Z}.
	\end{align*}
	In particular, $L_{\leq 3}+L_{\leq 3}\not\equiv\mathbb{R}\mod1$.
	
	If we go down further levels in SCC we reveal more gaps in $L_{\leq 3}+L_{\leq 3}$. Note that
	\begin{align*}
		[\dot{3}]     =\frac{ 2}{ 5}, &&
		[3,3,\dot{2}] =\frac{ 5}{12}, &&
		[3,2,\dot{3}] =\frac{ 9}{20}, &&
		[3,\dot{2}]   =\frac{ 1}{ 2}; \\
		[2,\dot{3}]   =\frac{ 7}{10}, &&
		[2,3,\dot{2}] =\frac{ 3}{ 4}, &&
		[2,2,\dot{3}] =\frac{17}{20}, &&
		[\dot{2}]     =1. 
	\end{align*}
	Thus at Level 2 of the SCC for $L_{\leq 3}$, we have
	\begin{align*}
		L_{\leq 3}
		\subseteq
		\left[\frac{2}{5},\frac{5}{12}\right]\cup
		\left[\frac{9}{20},\frac{1}{2}\right]\cup
		\left[\frac{7}{10},\frac{3}{4}\right]\cup
		\left[\frac{17}{20},1\right],
	\end{align*}
	and hence,
	\begin{align*}
		L_{\leq 3}+L_{\leq 3}
		\subseteq
		\left[\frac{4}{5},\frac{5}{6}\right]\cup
		\left[\frac{17}{20},1\right]\cup
		\left[\frac{11}{10},\frac{3}{2}\right]\cup
		\left[\frac{31}{20},2\right].
	\end{align*}
	This in turn shows that
	\begin{align*}
		L_{\leq 3}+L_{\leq 3}+\mathbb{Z}
		&\subseteq
		\left[\frac{4}{5},\frac{5}{6}\right]\cup
		\left[\frac{17}{20},1\right]\cup
		\left[\frac{1}{10},\frac{1}{2}\right]\cup
		\left[\frac{11}{20},1\right]+\mathbb{Z} \\
		&=\left[\frac{1}{10},\frac{1}{2}\right]\cup
		\left[\frac{11}{20},1\right]+\mathbb{Z}
	\end{align*}
	hence revealing another gap in $L_{\leq 3}+L_{\leq 3}$.
	
	This also provides an upper bound for the lengths of the potential intervals contained in $L_{\leq3}+L_{\leq 3}$, but does not indicate whether it contains an interval or not. Figure~\ref{fig: L_3 L_3} shows where these gaps occur in $L_{\leq 3}+L_{\leq 3}$, as well as indicating where the gaps lie in $[0,1]$.
	
	\newlength{\lengthLuroth}
	\begin{figure}
		\setlength{\lengthLuroth}{.48\textwidth}
		\centering
		\begin{subfigure}[b]{.49\textwidth}
			\centering
			\begin{tikzpicture}
				\path[fill=blue!10!white] 
				plot[samples=10,domain=0:0.1] (0,{\x*\lengthLuroth}) -- 
				plot[samples=10,domain=0:0.1] ({\x*\lengthLuroth},{\lengthLuroth*(0.1-\x)});
				\path[fill=blue!10!white] 
				plot[samples=10,domain=0:0.1] ({\lengthLuroth},{\lengthLuroth*(0.1-\x)}) -- 
				plot[samples=10,domain=0:0.1] ({\x*\lengthLuroth},{\lengthLuroth});
				\draw[loosely dashed, blue] (0,{1/10*\lengthLuroth}) -- ({1/10*\lengthLuroth},0);
				\draw[loosely dashed, blue] ({1/10*\lengthLuroth},{1*\lengthLuroth}) -- ({1*\lengthLuroth},{1/10*\lengthLuroth});
				\draw[loosely dashed, blue] ({0*\lengthLuroth},{1*\lengthLuroth}) -- ({1*\lengthLuroth},{0*\lengthLuroth}) node[pos=0.75,below, blue, font=\small,sloped] {$(0,1/10)+\mathbb{Z}$};
				\foreach \xa/\xb in {{2/5}/{1/2}, {7/10}/{1}} {
					\foreach \ya/\yb in {{2/5}/{1/2}, {7/10}/{1}} {
						\fill[black] ({\xa*\lengthLuroth},{\ya*\lengthLuroth}) rectangle ({\xb*\lengthLuroth},{\yb*\lengthLuroth});
					}
				}
				\draw[black] (0,0) rectangle ++ (\lengthLuroth,\lengthLuroth);
			\end{tikzpicture}
			\caption{Level 1}
		\end{subfigure}
		\hfill
		\begin{subfigure}[b]{.49\textwidth}
			\centering
			\begin{tikzpicture}
				\path[fill=blue!10!white] 
				plot[samples=10,domain=0:0.1] (0,{\x*\lengthLuroth}) -- 
				plot[samples=10,domain=0:0.1] ({\x*\lengthLuroth},{\lengthLuroth*(0.1-\x)});
				\path[fill=blue!10!white] 
				plot[samples=10,domain=0:0.1] ({\lengthLuroth},{\lengthLuroth*(0.1-\x)}) -- 
				plot[samples=10,domain=0:0.1] ({\x*\lengthLuroth},{\lengthLuroth});
				\draw[loosely dashed, blue] (0,{1/10*\lengthLuroth}) -- ({1/10*\lengthLuroth},0);
				\draw[loosely dashed, blue] ({1/10*\lengthLuroth},{1*\lengthLuroth}) -- ({1*\lengthLuroth},{1/10*\lengthLuroth});
				\draw[loosely dashed, blue] ({0*\lengthLuroth},{1*\lengthLuroth}) -- ({1*\lengthLuroth},{0*\lengthLuroth});
				\path[fill=red!10!white] 
				plot[samples=10,domain=.5:.55] ({\lengthLuroth},{\lengthLuroth*\x}) -- 
				plot[samples=10,domain=.5:.55] ({(1.05-\x)*\lengthLuroth},{\lengthLuroth});
				\path[fill=red!10!white] 
				plot[samples=10,domain=.5:.55] (0,{\lengthLuroth*(\x)}) -- 
				plot[samples=10,domain=.5:.55] ({(1.05-\x)*\lengthLuroth},0);
				\draw[loosely dotted, red] (0,{1/2*\lengthLuroth}) -- ({1/2*\lengthLuroth},0) node[pos=0.5,below, red, font=\small,sloped] {$(1/2,11/20)+\mathbb{Z}$};
				\draw[loosely dotted, red] (0,{11/20*\lengthLuroth}) -- ({11/20*\lengthLuroth},0);
				\draw[loosely dotted, red] ({1/2*\lengthLuroth},{1*\lengthLuroth}) -- ({1*\lengthLuroth},{1/2*\lengthLuroth});
				\draw[loosely dotted, red] ({11/20*\lengthLuroth},{1*\lengthLuroth}) -- ({1*\lengthLuroth},{11/20*\lengthLuroth});
				\foreach \xa/\xb in {{2/5}/{5/12}, {9/20}/{1/2}, {7/10}/{3/4}, {17/20}/{1}} {
					\foreach \ya/\yb in {{2/5}/{5/12}, {9/20}/{1/2}, {7/10}/{3/4}, {17/20}/{1}} {
						\fill[black] ({\xa*\lengthLuroth},{\ya*\lengthLuroth}) rectangle ({\xb*\lengthLuroth},{\yb*\lengthLuroth});
					}
				}
				\draw[black] (0,0) rectangle ++ (\lengthLuroth,\lengthLuroth);
			\end{tikzpicture}
			\caption{Level 2}
		\end{subfigure}
		
		\caption{Different Levels of $L_{\leq 3}$ under SCC product with itself on $[0,1]^2$}
		\label{fig: L_3 L_3}
	\end{figure}
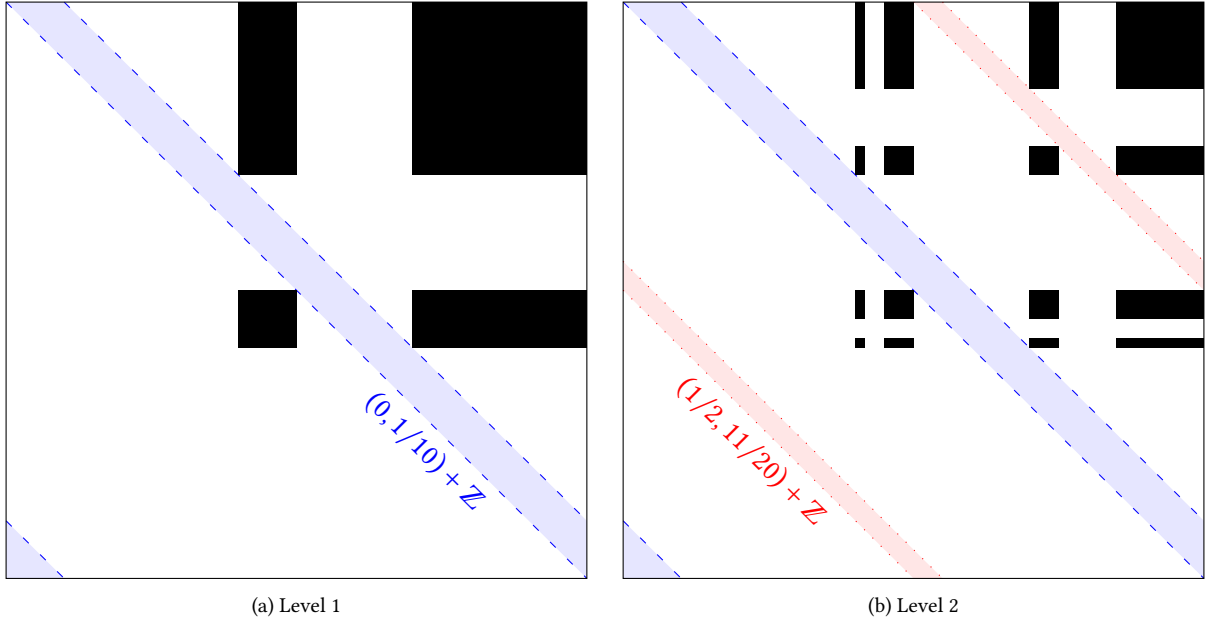

	Finally, we prove that $L_{\leq 3}+L_{\leq 4}$ is not an interval. Note that Level 3 of the SCC for $L_{\leq 3}$ is given by:
	\begin{align*}
		\left[\frac{2}{5},\frac{29}{72}\right]\cup
		\left[\frac{49}{120},\frac{5}{12}\right]\cup
		\left[\frac{9}{20},\frac{11}{24}\right]\cup
		\left[\frac{19}{40},\frac{1}{2}\right]\cup
		\left[\frac{7}{10},\frac{17}{24}\right]\cup
		\left[\frac{29}{40},\frac{3}{4}\right]\cup
		\left[\frac{17}{20},\frac{7}{8}\right]\cup
		\left[\frac{37}{40},1\right].
	\end{align*}
	Level 4 of the SCC for $L_{\leq 4}$ is given by:
	\begin{align*}
		\left[\frac{3}{11}, \frac{5}{18}\right]\cup
		\left[\frac{223}{792}, \frac{7}{24}\right]\cup
		\left[\frac{10}{33}, \frac{5}{16}\right]\cup
		\left[\frac{7}{22}, \frac{1}{3}\right]\cup
		\left[\frac{25}{66}, \frac{7}{18}\right]\cup
		\left[\frac{157}{396}, \frac{5}{12}\right]\cup
		\left[\frac{29}{66}, \frac{11}{24}\right]\cup
		\left[\frac{31}{66}, \frac{1}{2}\right]\cup \\
		\left[\frac{7}{11}, \frac{31}{48}\right]\cup
		\left[\frac{43}{66}, \frac{2}{3}\right]\cup
		\left[\frac{91}{132}, \frac{17}{24}\right]\cup
		\left[\frac{95}{132}, \frac{3}{4}\right]\cup
		\left[\frac{9}{11}, \frac{5}{6}\right]\cup
		\left[\frac{223}{264}, \frac{7}{8}\right]\cup
		\left[\frac{10}{11}, \frac{15}{16}\right]\cup
		\left[\frac{21}{22}, 1\right].
	\end{align*}
	Consequently, we get
	\begin{align*}
		L_{\leq 3}+L_{\leq 4}
		\subseteq
		\left[\frac{37}{55},\frac{49}{72}\right] \cup
		\left[\frac{899}{1320},2\right]
		,
	\end{align*}
	and for each interval, there is an element belonging to $L_{\leq 3}+L_{\leq 4}$, namely
	\begin{align*}
		[\dot3]+[\dot4]=\frac{2}{5}+\frac{3}{11}=\frac{37}{55}\in L_{\leq 3}+L_{\leq 4},
	\end{align*}
	and
	\begin{align*}
		[\dot2]+[\dot2]=1+1=2\in L_{\leq 3}+L_{\leq 4}.
	\end{align*}
	Figure~\ref{fig: L_3 L_4} in appendix illustrates this gap in $L_{\leq 3}+L_{\leq 4}$.
	
	\subsection{Proof of Theorem~\ref{thm: 2}}
	The following result is useful to prove Theorem~\ref{thm: 2}. It basically says that $L_{\geq k}$ can be estimated arbitrarily well by $L_{k,N}$ for sufficiently large $N$. 
	\begin{proposition}\label{prop: Luroth Cusick--Lee Type Helper Improved}
		For any $k\in\mathbb{N}\setminus\{1\}$ and $\varepsilon>0$, there exists $N_{k,\varepsilon}\in\mathbb{N}$ such that
		\begin{align*}
			\gamma_{k,N_{k,\varepsilon}}
			&>\frac{k-1}{k(k-1)-1}-\varepsilon, \\
			\left\lvert{I_{k,N_{k,\varepsilon}}}\right\rvert
			&>\frac{k-1}{k(k-1)-1}-\varepsilon, \\
			\left\lvert{O_{k,N_{k,\varepsilon}}}\right\rvert
			&<\frac{k-2}{k(k^2-2)-1}+\varepsilon
			.
		\end{align*}
	\end{proposition}
	\begin{proof}
		Pick any $k\in\mathbb{N}\setminus\{1\}$ and $\varepsilon>0$. Define
		\begin{align*}
			N_{k,\varepsilon}
			\coloneqq \max{\left\{k,\frac{1}{\varepsilon}\left(\frac{k-1}{k(k-1)-1}+1\right)+1\right\}}+1>k.
		\end{align*}
		Then for $N_1=k$ and $N_2=N=N_{k,\varepsilon}$, using SCC we have,
		\begin{align*}
			\left\lvert{I_{k,N}}\right\rvert
			&=k\rangle-\langle N \\
			&=\frac{k-1}{k(k-1)-1}-\frac{N-1}{N(N-1)-1} \\
			&\geq \frac{k-1}{k(k-1)-1}-\frac{1}{N-1} \\
			&>\frac{k-1}{k(k-1)-1}-\varepsilon;
		\end{align*}
		and
		\begin{align*}
			\left\lvert{O_{k,N}}\right\rvert
			&=\langle k-k+1\rangle \\
			&=\left(\frac{1}{k}+\frac{1}{k(k-1)}\frac{N-1}{N(N-1)-1}\right)-\left(\frac{1}{k+1}+\frac{1}{(k+1)k}\frac{k-1}{k(k-1)-1}\right) \\
			&=\left(\frac{1}{k}-\frac{1}{k+1}-\frac{1}{(k+1)k}\frac{k-1}{k(k-1)-1}\right)+\frac{1}{k(k-1)}\frac{N-1}{N(N-1)-1} \\
			&\leq\left(\frac{1}{k}-\frac{1}{k+1}-\frac{1}{(k+1)k}\frac{k-1}{k(k-1)-1}\right)+\frac{1}{N-1} \\
			&<\frac{k-2}{k(k^2-2)-1}+\varepsilon
			.
		\end{align*}
		By applying Proposition \ref{prop: Stepwise Complete tau}, we get the thickness:
		\begin{align*}
			\tau_{k,N}
			=\frac{N\rangle-\langle N}{\langle(N-1)-N\rangle}
			.
		\end{align*}
		The numerator is given by,
		\begin{align*}
			N\rangle-\langle N
			&=\left(\frac{1}{N}+\frac{1}{N(N-1)}\frac{k-1}{k(k-1)-1}\right)-\frac{N-1}{N(N-1)-1} \\
			&=\frac{1}{N(N-1)}\left(\frac{k-1}{k(k-1)-1}-\frac{N-1}{N(N-1)-1}\right)
			,
		\end{align*}
		and the denominator is given by,
		\begin{align*}
			\langle(N-1)&-N\rangle \\
			&=\left(\frac{1}{N-1}+\frac{1}{(N-2)}\frac{1}{N(N-1)-1}\right)-\left(\frac{1}{N}+\frac{1}{N(N-1)}\frac{k-1}{k(k-1)-1}\right) \\
			&=\frac{1}{N(N-1)}\left(\frac{N^3-2N^2+2}{(N-2)(N^2-N-1)}-\frac{k-1}{k(k-1)-1}\right).
		\end{align*}
		Hence we have
		\begin{align*}
			\tau_{k,N}
			=\left.\left(\frac{k-1}{k(k-1)-1}-\frac{N-1}{N(N-1)-1}\right)\middle/\left(\frac{N^3-2N^2+2}{(N-2)(N^2-N-1)}-\frac{k-1}{k(k-1)-1}\right)\right.
			,
		\end{align*}
		Lastly we can calculate $\gamma$,
		\begin{align*}
			\gamma_{k,N}
			&=\frac{1}{1+1/\tau_{k,N}} \\
			&=\left.\left(\frac{k-1}{k(k-1)-1}-\frac{N-1}{N(N-1)-1}\right)\middle/\left(\frac{N^3-2N^2+2}{(N-2)(N^2-N-1)}-\frac{N-1}{N(N-1)-1}\right)\right. \\
			&=\left(\frac{k-1}{k(k-1)-1}-\frac{N-1}{N(N-1)-1}\right)\frac{N^3-3N^2+N+2}{N^3-3N^2+3N} \\
			&>\left(\frac{k-1}{k(k-1)-1}-\frac{1}{N-1}\right)\left(1-\frac{1}{N-1}\right) \\
			&>\frac{k-1}{k(k-1)-1}-\frac{1}{N-1}\left(\frac{k-1}{k(k-1)-1}+1\right) \\
			&>\frac{k-1}{k(k-1)-1}-\varepsilon
			,
		\end{align*}
		which completes the proof.
	\end{proof}
	
	\begin{proof}[Proof of Theorem~\ref{thm: 2}]
		It suffices to prove that there exists $\varepsilon>0$ such that
		\begin{align*}
			L_{k_1,N_{k_1,\varepsilon}}+
			L_{k_2,N_{k_2,\varepsilon}}+\cdots+
			L_{k_n,N_{k_n,\varepsilon}}\equiv\mathbb{R}\mod1.
		\end{align*}
		We first require
		\begin{align*}
			0<\varepsilon<\frac{1}{n}\left(\sum_{j=1}^{n}\frac{k_j-1}{k_j(k_j-1)-1}-1\right).
		\end{align*}
		then we get, by Proposition~\ref{prop: Luroth Cusick--Lee Type Helper Improved}, that
		\begin{align*}
			S_{\gamma}
			=\sum_{j=1}^n\gamma_{k_j,N_{k_j,\varepsilon}}>1, &&
			\sum_{j=1}^n\left\lvert I_{k_j,N_{k_j,\varepsilon}}\right\rvert=\sum_{j=1}^n\operatorname{diam}{L_{k_j,N_{k_j,\varepsilon}}}>1
			.
		\end{align*}
		It remains to verify the conditions \eqref{eq: (12)} and \eqref{eq: (13)}, where $C_{\mathcal{A}_i}=L_{k_{n+1-i},N_{k_{n+1-i},\varepsilon}}$. For \eqref{eq: (12)}, for any $i<j$,
		\begin{align*}
			\left\lvert{I_{k_i,N_{k_i,\varepsilon}}}\right\rvert
			>\frac{k_i-1}{k_i(k_i-1)-1}-\varepsilon
			>\frac{k_j-2}{k_j({k_j}^2-2)-1}+\varepsilon
			>
			\left\lvert{O_{k_j,N_{k_j,\varepsilon}}}\right\rvert
			,
		\end{align*}
		where we also assume
		\begin{align*}
			0
			<\varepsilon
			<\frac{1}{2}\min_{i<j}{\left\{\frac{k_i-1}{k_i(k_i-1)-1}-\frac{k_j-2}{k_j({k_j}^2-2)-1}\right\}}
			.
		\end{align*}
		For \eqref{eq: (13)}, we need \eqref{eq: (4)}, for any $j=2,\dots,n$,
		\begin{align*}
			\left\lvert{I_{k_j,N_{k_j,\varepsilon}}}\right\rvert
			&>\frac{k_j-1}{k_j(k_j-1)-1}-\varepsilon \\
			&>\frac{1}{k_j}
			\geq\frac{1}{{{k_{j-1}}^2+2k_{j-1}+2}} \\
			&>\frac{k_{j-1}-2}{k_{j-1}({k_{j-1}}^2-2)-1}+\varepsilon
			>
			\left\lvert{O_{k_{j-1},N_{k_{j-1},\varepsilon}}}\right\rvert
			,
		\end{align*}
		here we are furthermore requiring that
		\begin{align*}
			0<\varepsilon
			<\min_{j\geq2}\left\{\frac{k_j-1}{k_j(k_j-1)-1}-\frac{1}{k_j},\frac{1}{{{k_{j-1}}^2+2k_{j-1}+2}}-\frac{k_{j-1}-2}{k_{j-1}({k_{j-1}}^2-2)-1}\right\}
			.
		\end{align*}
		The result then follows by applying Proposition~\ref{prop: astels}.
	\end{proof}
	
	\subsection{Proof of Corollary~\ref{coro: 3}}\label{subsectproof2}
	
	One can observe that Corollary~\ref{coro: 3} follows directly from Theorem~\ref{thm: 2}, by taking $k_j=k$ for each $j\in\{1,\ldots,n\}$. Alternatively, it is possible to prove it directly, by applying Proposition~\ref{prop: astels}. For $k=3$, it is even possible to prove it by applying Proposition~\ref{prop: hlavka10}.
	
	By Proposition~\ref{prop: Stepwise Complete tau} and the definition of $\gamma$, we see that for $L_{3,16}$,
	\begin{align*}
		\gamma_{3,16}=\frac{2821}{8440}>\frac{1}{3}.
	\end{align*}
	By Proposition~\ref{prop: astels} since $3\gamma_{3,16}>1$, we have
	\begin{align*}
		3 L_{3,16}
		=3 I_{3,16}
		=3 \left[\frac{15}{239},\frac{2}{5}\right]
		=\left[\frac{45}{239},\frac{6}{5}\right],
	\end{align*}
	which is of length ${1209}/{1195}>1$. Therefore,
	\begin{align*}
		3 L_{3,16}\equiv\mathbb{R}\mod1,
	\end{align*}
	and consequently $3 L_{\geq3}\equiv\mathbb{R}\mod1$. Similarly,
	\begin{align*}
		4 L_{4,45}\equiv\mathbb{R}\mod1, &&
		5 L_{5,96}\equiv\mathbb{R}\mod1, &&
		6 L_{6,175}\equiv\mathbb{R}\mod1, 
	\end{align*}
	and so on. In general, for any $k\in\mathbb{N}\setminus\{1\}$, it can be verified that,
	\begin{align*}
		k L_{k,k^3}\equiv\mathbb{R}\mod1.
	\end{align*}
	By the formula for $\tau_{k,k^3}$ derived in Proposition \ref{prop: Stepwise Complete tau}, we have the value of $k\gamma_{k,k^3}$ being at least 1:
	\begin{align*}
		k\gamma_{k,k^3}
		&=\frac{k(k-1)(k+1)(k^3-2)(k^4-k^3-k+2)}{k^2(k^2-k-1)(k^6-3k^3+3)} \\
		&=1+\frac{k^6+k^5-5k^3-k^2+k+4}{k^9-k^8-k^7-3 k^6+3k^5+3k^4+3k^3-3 k^2-3k} \\
		&\geq 1.
	\end{align*}
	
	Lastly for $k=3$, we can apply Proposition~\ref{prop: hlavka10}, by considering an unordered construction of $L_{3,26}$. For the unordered construction, given in Figure~\ref{fig: L_3_26} in appendix, we have
	\begin{align*}
		g_{3,26}
		=\frac{\langle25-26\rangle}{25\rangle-\langle26}
		=\frac{24989}{54314}, &&
		h_{3,26}
		=\frac{6\rangle-\langle6}{\langle5-6\rangle}
		= \frac{391}{1689},
	\end{align*}
	where the $g$- and $h$- values are obtained from Level 6 and 4 respectively. Note that
	\begin{align*}
		g_{3,26}+h_{3,26}
		= \frac{63443195}{91736346}
		\leq
		\frac{391}{563}
		= 3 h_{3,26}.
	\end{align*}
	By Proposition~\ref{prop: hlavka10}, we see that
	\begin{align*}
		3 L_{3,26}
		= 3 I_{3,26}
		= 3\left[\frac{25}{649}, \frac{2}{5}\right]
		= \left[\frac{75}{649}, \frac{6}{5}\right]
		,
	\end{align*}
	which is of length ${3519}/{3245}> 1$. Hence,
	\begin{align*}
		3 L_{3,26} \equiv \mathbb{R} \mod 1,
	\end{align*}
	and consequently $3 L_{\geq 3} \equiv \mathbb{R} \mod 1$.
	
	\subsection{Proof of Theorem~\ref{thm: 4}}
	Let $k\in\mathbb{N}\setminus\{1\}$. By Proposition~\ref{prop: Stepwise Complete tau}, the thickness and the $\gamma$-value of $L_{\leq k+2}$ are
	\begin{align*}
		\tau_{k+2}=\frac{k^2}{k+1}, && \gamma_{k+2}=\frac{1}{1+1/\tau_{k+2}}=\frac{k^2}{k^2+k+1}.
	\end{align*}
	Let
	\begin{align*}
		\varepsilon
		=\min{\left\{\frac{2k}{k^4-k^2-2 k-1},\frac{2k^2}{k^4+2k^3-3k^2-4k-1},\frac{1}{2}\right\}}
		>0.
	\end{align*}
	By Proposition~\ref{prop: Luroth Cusick--Lee Type Helper Improved}, there exists $N_{k,\varepsilon}\in\mathbb{N}\setminus\{1\}$ such that
	\begin{align*}
		\gamma_{k,N_{k,\varepsilon}}>\frac{k-1}{k(k-1)-1}-\varepsilon, \\
		\left\lvert{I_{k,N_{k,\varepsilon}}}\right\rvert>\frac{k-1}{k(k-1)-1}-\varepsilon, \\
		\left\lvert{O_{k,N_{k,\varepsilon}}}\right\rvert<\frac{k-2}{k(k^2-2)-1}+\varepsilon.
	\end{align*}
	Now we observe, that
	\begin{align*}
		\gamma_{k+2}+\gamma_{k,N_{k,\varepsilon}}
		>\frac{k^4-k^2-1}{k^4-k^2-2 k-1}-\varepsilon
		\geq1,
	\end{align*}
	and
	\begin{align*}
		\left\lvert{I_{k+2}}\right\rvert=\frac{k (k+2)}{k^2+3 k+1}\geq\frac{k-2}{k(k^2-2)-1}+\frac{1}{2}\geq\left\lvert{O_{k,N_{k,\varepsilon}}}\right\rvert.
	\end{align*}
	By applying Proposition~\ref{prop: astels} we get,
	\begin{align*}
		L_{\leq k+2}+L_{k,N_{k,\varepsilon}}=I_{k+2}+I_{k,N_{k,\varepsilon}}.
	\end{align*}
	
	On the other hand,
	\begin{align*}
		\left\lvert{I_{k+2}}\right\rvert+\left\lvert{I_{k,N_{k,\varepsilon}}}\right\rvert
		&>\frac{k(k+2)}{k^2+3k+1}+\frac{k-1}{k(k-1)-1}-\varepsilon \\
		&=\frac{k^4+2k^3-k^2-4k-1}{k^4+2k^3-3k^2-4k-1}-\varepsilon \\
		&\geq1.
	\end{align*}
	Hence
	\begin{align*}
		L_{\leq k+2}+L_{k,N_{k,\varepsilon}}\equiv\mathbb{R}\mod1,
	\end{align*}
	and consequently $L_{\leq k+2}+L_{\geq k}\equiv\mathbb{R}\mod1$.
	
	\section{Proofs on Dimension}
	In this section, we prove the main results on the Hausdorff dimensions of L\"uroth sets and sums of L\"uroth sets, namely Theorems~\ref{thm: dim A},~\ref{thm: dim B}, and~\ref{thm: dim C}.
	
	\subsection{Proof of Theorem~\ref{thm: dim A}}
	
	In the degenerate case, that is, at least one of $\mathcal{A}$ and $\mathcal{B}$ is a singleton, the result holds trivially. Indeed, without loss of generality, there exists $a\in\mathbb{N}\setminus\{1\}$ such that $\mathcal{A} = \{a\}$. Then we have,
	\begin{align*}
		L_{\mathcal{A}}=\left\{\frac{a-1}{(a-1)a-1}\right\},
	\end{align*}
	and consequently $\dim{L_{\mathcal{A}}}=0$. Hence, $\dim{(L_{\mathcal{A}}+L_{\mathcal{B}})}=\dim{L_{\mathcal{B}}}=\dim{L_{\mathcal{A}}}+\dim{L_{\mathcal{B}}}
	\leq1$.
	
	Suppose that both $\mathcal{A}$ and $\mathcal{B}$ are finite and not singletons. We aim to apply the result of Peres--Shmerkin~\cite[Theorem 2]{PERES_SHMERKIN_2009} on the Hausdorff dimension of the sumset of two iterative function systems (IFS).
	\begin{proposition}[Peres--Shmerkin {\cite[Theorem 2]{PERES_SHMERKIN_2009}}, 2009]\label{dimensionres}\label{prop: Peres--Shmerkin}
		Let $K$ and $K'$ be the attractors of the IFS's $\{r_i x+t_i\}_{i=1}^{n}$ and $\{r'_i x+t'_i\}_{i=1}^{n'}$ respectively. If there exist $j, j'$ such that $\log{\lvert r_j \rvert}/\log{\lvert r'_{j'} \rvert}$ is irrational, then
		\begin{align*}
			\dim{(K+K')} = \min{\left\{1,\dim{K}+\dim{K'}\right\}}.
		\end{align*}
	\end{proposition}
	
	Since $\mathcal{A}$ and $\mathcal{B}$ are finite, $L_{\mathcal{A}}$ and $L_{\mathcal{B}}$ can be expressed as IFS's:
	\begin{align*}
		L_{\mathcal{A}}=\bigcup_{d\in\mathcal{A}}f_d(L_{\mathcal{A}}), &&
		L_{\mathcal{B}}=\bigcup_{d\in\mathcal{B}}f_d(L_{\mathcal{B}})
		,
	\end{align*}
	where for any $d\in\mathbb{N}\setminus\{1\}$ and $x\in[0,1]$,
	\begin{align*}
		f_d(x)\coloneqq\frac{1}{d(d-1)}x+\frac{1}{d}.
	\end{align*}
	Since both $\mathcal{A}$ and $\mathcal{B}$ are not singletons, there exist $a\in\mathcal{A}$ and $b\in\mathcal{B}$ such that $a\ne b$. By a previous result by the second author~\cite{lee2024explicitupperboundsdecay}, we have that
	\begin{align*}
		\frac{\log(a(a-1))}{\log(b(b-1))}
	\end{align*}
	is irrational. By applying Proposition~\ref{prop: Peres--Shmerkin}, the proof of Theorem~\ref{thm: dim A} is completed.
	
	\subsection{Proof of Theorem~\ref{thm: dim B}}
	The proof applies a similar technique to that of Good. It suffices to prove the result for $k\geq16$, since for any $k\leq15$ we have $L_{\geq16}\subseteq L_{\geq k}$, so
	\begin{align*}
		\dim{L_{\geq k}}
		\geq\dim{L_{\geq 16}}
		>\frac{1}{2}+\frac{1}{2\log{16}}
		=\frac{1}{2}+\frac{1}{2\log{\max{\{16,k\}}}}.
	\end{align*}
	Let $k\in\mathbb{N}$. Suppose $k\geq16$. By the Intermediate Value Theorem, there exists $x_k>0$ such that $k^{-x_k}=x_k$. Let $\varepsilon>0$ and pick
	\begin{align*}
		N
		\coloneqq
		\max{\left\{
			k+1,
			\left\lceil{\left(x_k\left(k^\varepsilon-1\right)\right)}^{-1/(x_k-\varepsilon)}\right\rceil
			\right\}}.
	\end{align*}
	Define $F=F_{k,\varepsilon}:\mathbb{R}\to\mathbb{R}$ by, for any $x\in\mathbb{R}$,
	\begin{align*}
		F(x)\coloneqq k^{-x}-N^{-x}-x.
	\end{align*}
	Note that $F$ is continuous on $\mathbb{R}$, $F(x_k)<0$, and 
	\begin{align*}
		F(x_k-\varepsilon)
		&=k^{-x_k+\varepsilon}-N^{-x_k+\varepsilon}-(x_k-\varepsilon) \\
		&=x_k(k^\varepsilon-1)+\varepsilon-N^{-x_k+\varepsilon}\geq\varepsilon>0.
	\end{align*}
	By the Intermediate Value Theorem, there exists $x_{k,\varepsilon}\in(x_k-\varepsilon,x_k)$ such that $F(x_{k,\varepsilon})=0$. Thus, by writing $s_k=(x_k+1)/2\in\mathbb{R}$, there exists $s_{k,\varepsilon}>s_k-\varepsilon/2$ such that
	\begin{align*}
		\frac{1}{2s_{k,\varepsilon}-1}\left(k^{-(2s_{k,\varepsilon}-1)}-N^{-(2s_{k,\varepsilon}-1)}\right)=1,
	\end{align*}
	and $k^{-(2s_k-1)}=2s_k-1$. Thus, for any $\varepsilon>0$,
	\begin{align*}
		\sum_{d=k}^{N}\left(\frac{1}{d(d-1)}\right)^{s_{k,\varepsilon}}
		>\int_{k}^{N}\frac{\mathrm{d}x}{x^{2s_{k,\varepsilon}}}=1;
	\end{align*}
	and
	\begin{align*}
		\dim{L_{\geq k}}\geq\dim{L_{k,N}}>s_{k,\varepsilon}>s_k-\varepsilon/2.
	\end{align*}
	Since this $\varepsilon>0$ is arbitrary, we have
	\begin{align*}
		\dim{L_{\geq k}}\geq s_{k}.
	\end{align*}
	Since $k\geq16>e^e$, the equality $k^{-(2s_k-1)}=2s_k-1$ implies that
	\begin{align*}
		2s_k-1
		&>\frac{1}{\log{k}} \\
		s_k
		&>\frac{1}{2}+\frac{1}{2\log{k}}.
	\end{align*}
	
	\subsection{Proof of Theorem~\ref{thm: dim C}}
	It suffices to prove the following result.
	\begin{proposition}\label{prop: dim L_k,3k}
		For any $k\in\mathbb{N}\setminus\{1\}$, $\dim{L_{k,3k}}>1/2$.
	\end{proposition}
	\begin{proof}
		By, for example~\cite[Theorem 2]{lee2024explicitupperboundsdecay}, the Hausdorff dimension of $L_{k,3k}$ is the unique solution $s=s_{k,3k}\in[0,1]$ to 
		\begin{align*}
			\sum_{d=k}^{3k}\left(\frac{1}{d(d-1)}\right)^{s_{k,3k}}=1.
		\end{align*}
		Notice that
		\begin{align*}
			\sum_{d=k}^{3k}\left(\frac{1}{d(d-1)}\right)^{1}
			=\frac{2k+1}{3k(k-1)}
			<1.
		\end{align*}
		and
		\begin{align*}
			\sum_{d=k}^{3k}\left(\frac{1}{d(d-1)}\right)^{1/2}
			>\int_{k}^{3k}\frac{\mathrm{d}x}{x}
			=\log{3}
			>1.
		\end{align*}
		By the Intermediate Value Theorem, $s_{k,3k}>1/2$.
	\end{proof}
	\begin{proof}[Proof of Theorem~\ref{thm: dim C}]
		By Theorem~\ref{thm: dim A} and Proposition~\ref{prop: dim L_k,3k}, for any $k\in\mathbb{N}\setminus\{1\}$,
		\begin{align*}
			\dim{(L_{\geq k}+L_{\geq k})}
			&\geq\dim{\left(L_{k,3k}+L_{k,3k}\right)} \\
			&=\min{\left\{1,\dim{L_{k,3k}}+\dim{L_{k,3k}}\right\}} \\
			&=1.
		\end{align*}    
	\end{proof}
	
	\section*{Acknowledgements}
	The first author would like to express sincere gratitude to Professor Sanju Velani and the Department of Mathematics at the University of York for hosting the research environment change. Both authors also express their appreciation to Dr. Raphael Bennett-Tennenhaus for providing valuable and insightful feedback on the manuscript, and to the anonymous referees for their comments, which improved the clarity of the paper.

	
	\appendix
	\section{}
	
	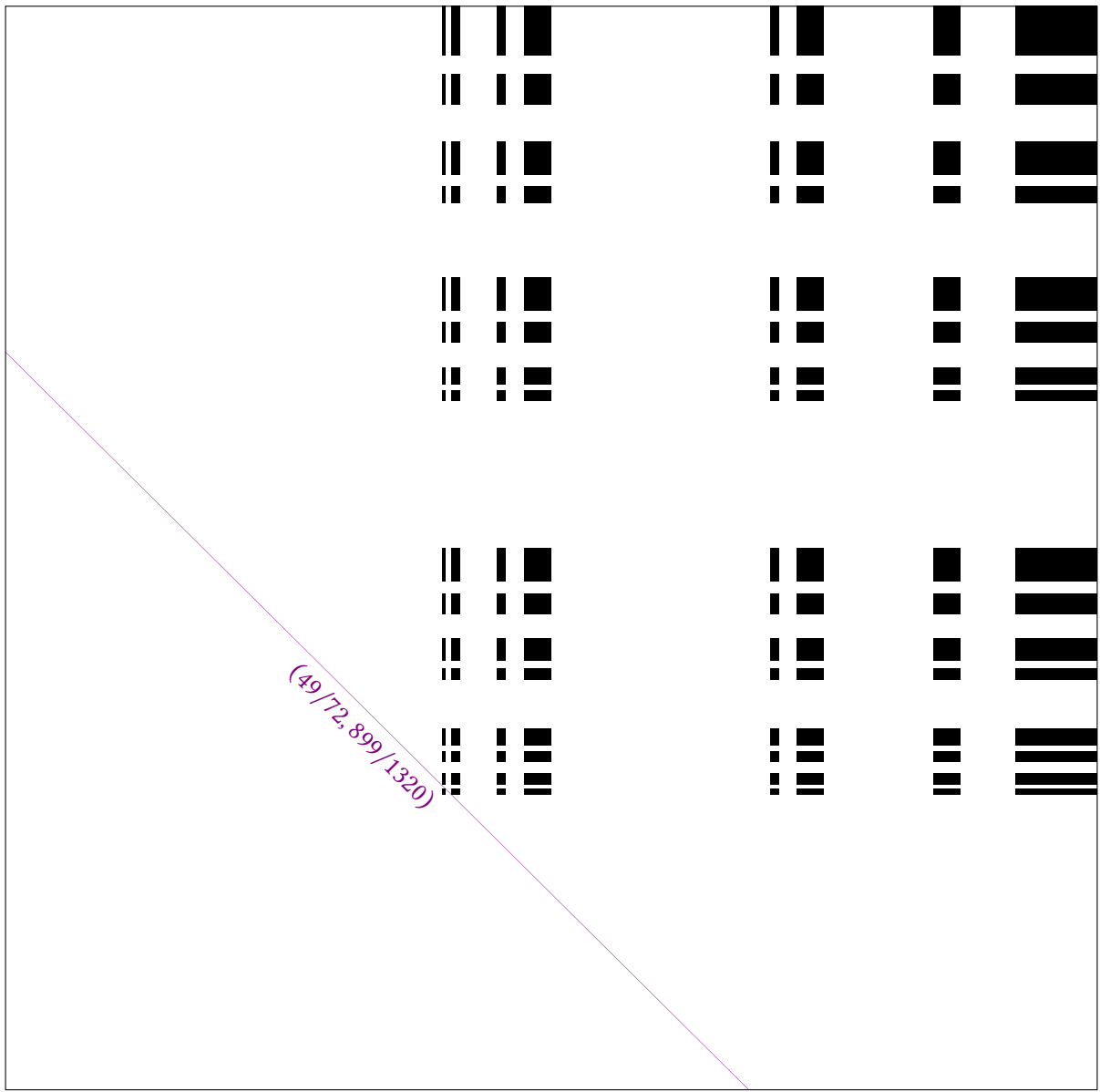
\begin{figure}[H]
		\centering
		\setlength{\lengthLuroth}{.99\textwidth}
		\begin{tikzpicture}
			\path[fill=violet] 
			plot[samples=10,domain=49/72:899/1320] (0,{\lengthLuroth*(\x)}) -- 
			plot[samples=10,domain=49/72:899/1320] ({(674/495-\x)*\lengthLuroth},0);
			\path (0,{337/495*\lengthLuroth}) -- ({337/495*\lengthLuroth},0) node[pos=0.5,below, violet, font=\small,sloped] {$(49/72,899/1320)$};
			\foreach \xa/\xb in {
				{2/5}/{29/72},
				{49/120}/{5/12},
				{9/20}/{11/24},
				{19/40}/{1/2},
				{7/10}/{17/24},
				{29/40}/{3/4},
				{17/20}/{7/8},
				{37/40}/1} {
				\foreach \ya/\yb in {
					{3/11}/{5/18},
					{223/792}/{7/24},
					{10/33}/{5/16},
					{7/22}/{1/3},
					{25/66}/{7/18},
					{157/396}/{5/12},
					{29/66}/{11/24},
					{31/66}/{1/2},
					{7/11}/{31/48},
					{43/66}/{2/3},
					{91/132}/{17/24},
					{95/132}/{3/4},
					{9/11}/{5/6},
					{223/264}/{7/8},
					{10/11}/{15/16},
					{21/22}/{1}} {
					\fill[black] ({\xa*\lengthLuroth},{\ya*\lengthLuroth}) rectangle ({\xb*\lengthLuroth},{\yb*\lengthLuroth});
				}
			}
			\draw[black] (0,0) rectangle ++ (\lengthLuroth,\lengthLuroth);
		\end{tikzpicture}
		\caption{SCC $L_{\leq 3}$ Level 3 product with SCC $L_{\leq 4}$ Level 4 on $[0,1]^2$}
		\label{fig: L_3 L_4}
	\end{figure}
	
	\begin{center}
		\rotatebox{270}{%
			\begin{minipage}{.99\textheight}
				\centering
				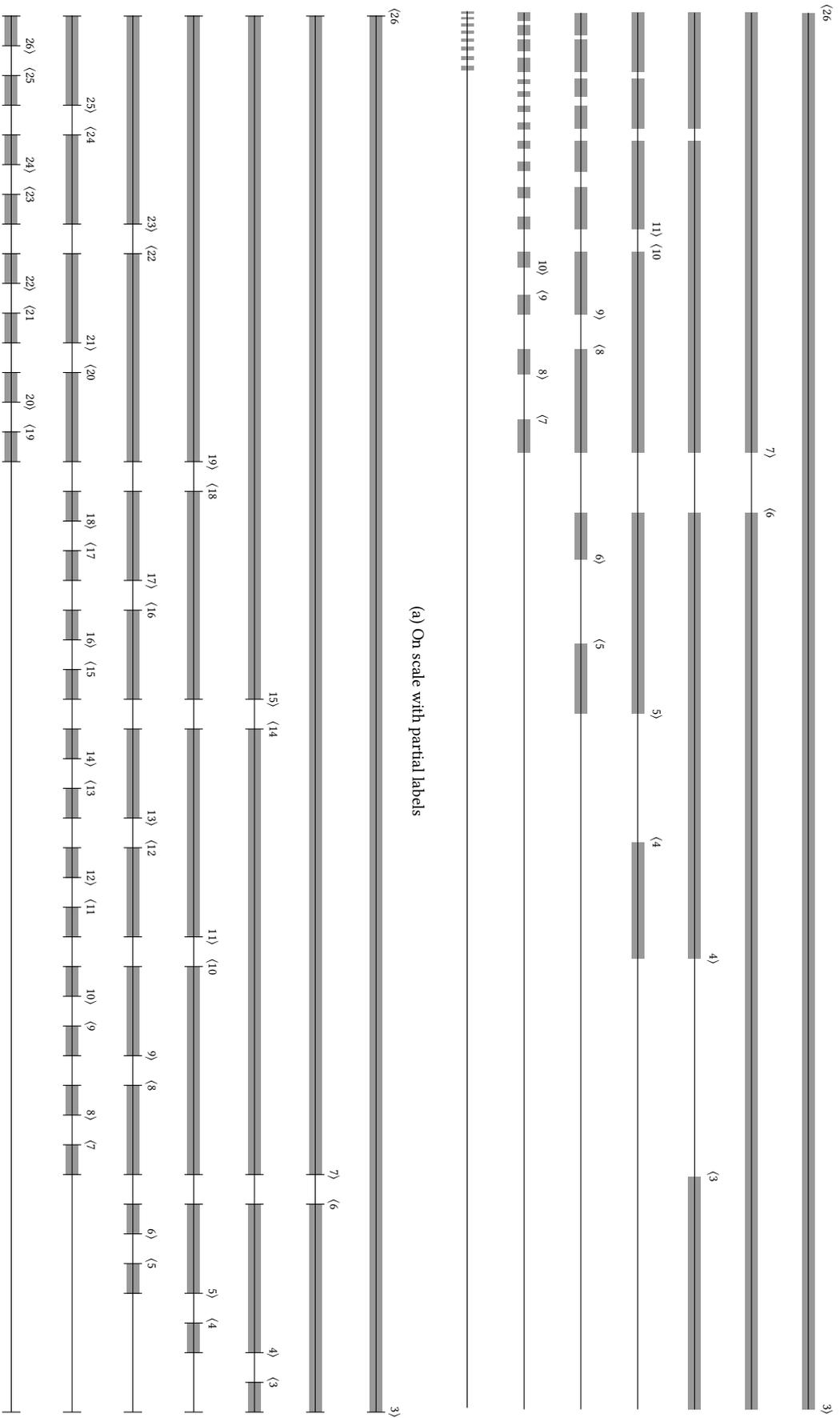
\begin{figure}[H]
					\centering
					\begin{subfigure}[b]{.98\textheight}
						\centering {\tiny 
							
							\begin{tikzpicture}
								\useasboundingbox ({f(26,26)},-0.45) rectangle ({f(3,3)},0.45);
								\fill [mycolour1] ({f(26,26)},-0.1) rectangle ({f( 3, 3)},0.1); 
								\draw ({f( 3, 3)},0)--({f(26,26)},0);
								\node at ({f( 3, 3)},.3) {$ 3\rangle$};
								\node at ({f(26,26)},.3) {$\langle26$};
							\end{tikzpicture}
							
							\begin{tikzpicture}
								\useasboundingbox ({f(26,26)},-0.45) rectangle ({f(3,3)},0.45);
								\fill [mycolour1] ({f(26,26)},-0.1) rectangle ({f( 7, 3)},0.1); 
								\fill [mycolour1] ({f( 6,26)},-0.1) rectangle ({f( 3, 3)},0.1); 
								\draw ({f( 3, 3)},0)--({f(26,26)},0);
								\node at ({f( 7, 3)},.3) {$ 7\rangle$};
								\node at ({f( 6,26)},.3) {$\langle 6$};
							\end{tikzpicture}
							
							\begin{tikzpicture}
								\useasboundingbox ({f(26,26)},-0.45) rectangle ({f(3,3)},0.45);
								\fill [mycolour1] ({f(26,26)},-0.1) rectangle ({f(15, 3)},0.1);
								\fill [mycolour1] ({f(14,26)},-0.1) rectangle ({f( 7, 3)},0.1);
								\fill [mycolour1] ({f( 6,26)},-0.1) rectangle ({f( 4, 3)},0.1);
								\fill [mycolour1] ({f( 3,26)},-0.1) rectangle ({f( 3, 3)},0.1);
								\draw ({f( 3, 3)},0)--({f(26,26)},0);
								\node at ({f( 4, 3)},.3) {$ 4\rangle$};
								\node at ({f( 3,26)},.3) {$\langle 3$};
							\end{tikzpicture}
							
							\begin{tikzpicture}
								\useasboundingbox ({f(26,26)},-0.45) rectangle ({f(3,3)},0.45);
								\fill [mycolour1] ({f(26,26)},-0.1) rectangle ({f(19, 3)},0.1);
								\fill [mycolour1] ({f(18,26)},-0.1) rectangle ({f(15, 3)},0.1);
								\fill [mycolour1] ({f(14,26)},-0.1) rectangle ({f(11, 3)},0.1);
								\fill [mycolour1] ({f(10,26)},-0.1) rectangle ({f( 7, 3)},0.1);
								\fill [mycolour1] ({f( 6,26)},-0.1) rectangle ({f( 5, 3)},0.1);
								\fill [mycolour1] ({f( 4,26)},-0.1) rectangle ({f( 4, 3)},0.1);
								\draw ({f( 3, 3)},0)--({f(26,26)},0);
								\node at ({f(11, 3)},.3) {$11\rangle$};
								\node at ({f(10,26)},.3) {$\langle10$};
								\node at ({f( 5, 3)},.3) {$ 5\rangle$};
								\node at ({f( 4,26)},.3) {$\langle 4$};
							\end{tikzpicture}
							
							\begin{tikzpicture}
								\useasboundingbox ({f(26,26)},-0.45) rectangle ({f(3,3)},0.45);
								\fill [mycolour1] ({f(26,26)},-0.1) rectangle ({f(23, 3)},0.1);
								\fill [mycolour1] ({f(22,26)},-0.1) rectangle ({f(19, 3)},0.1);
								\fill [mycolour1] ({f(18,26)},-0.1) rectangle ({f(17, 3)},0.1);
								\fill [mycolour1] ({f(16,26)},-0.1) rectangle ({f(15, 3)},0.1);
								\fill [mycolour1] ({f(14,26)},-0.1) rectangle ({f(13, 3)},0.1);
								\fill [mycolour1] ({f(12,26)},-0.1) rectangle ({f(11, 3)},0.1);
								\fill [mycolour1] ({f(10,26)},-0.1) rectangle ({f( 9, 3)},0.1);
								\fill [mycolour1] ({f( 8,26)},-0.1) rectangle ({f( 7, 3)},0.1);
								\fill [mycolour1] ({f( 6,26)},-0.1) rectangle ({f( 6, 3)},0.1);
								\fill [mycolour1] ({f( 5,26)},-0.1) rectangle ({f( 5, 3)},0.1);
								\draw ({f( 3, 3)},0)--({f(26,26)},0);
								\node at ({f( 9, 3)},.3) {$ 9\rangle$};
								\node at ({f( 8,26)},.3) {$\langle 8$};
								\node at ({f( 6, 3)},.3) {$ 6\rangle$};
								\node at ({f( 5,26)},.3) {$\langle 5$};
							\end{tikzpicture}
							
							\begin{tikzpicture}
								\useasboundingbox ({f(26,26)},-0.45) rectangle ({f(3,3)},0.45);
								\fill [mycolour1] ({f(26,26)},-0.1) rectangle ({f(25, 3)},0.1);
								\fill [mycolour1] ({f(24,26)},-0.1) rectangle ({f(23, 3)},0.1);
								\fill [mycolour1] ({f(22,26)},-0.1) rectangle ({f(21, 3)},0.1);
								\fill [mycolour1] ({f(20,26)},-0.1) rectangle ({f(19, 3)},0.1);
								\fill [mycolour1] ({f(18,26)},-0.1) rectangle ({f(18, 3)},0.1);
								\fill [mycolour1] ({f(17,26)},-0.1) rectangle ({f(17, 3)},0.1);
								\fill [mycolour1] ({f(16,26)},-0.1) rectangle ({f(16, 3)},0.1);
								\fill [mycolour1] ({f(15,26)},-0.1) rectangle ({f(15, 3)},0.1);
								\fill [mycolour1] ({f(14,26)},-0.1) rectangle ({f(14, 3)},0.1);
								\fill [mycolour1] ({f(13,26)},-0.1) rectangle ({f(13, 3)},0.1);
								\fill [mycolour1] ({f(12,26)},-0.1) rectangle ({f(12, 3)},0.1);
								\fill [mycolour1] ({f(11,26)},-0.1) rectangle ({f(11, 3)},0.1);
								\fill [mycolour1] ({f(10,26)},-0.1) rectangle ({f(10, 3)},0.1);
								\fill [mycolour1] ({f( 9,26)},-0.1) rectangle ({f( 9, 3)},0.1);
								\fill [mycolour1] ({f( 8,26)},-0.1) rectangle ({f( 8, 3)},0.1);
								\fill [mycolour1] ({f( 7,26)},-0.1) rectangle ({f( 7, 3)},0.1);
								\draw ({f( 3, 3)},0)--({f(26,26)},0);
								\node at ({f(10, 3)},.3) {$10\rangle$};
								\node at ({f( 9,26)},.3) {$\langle 9$};
								\node at ({f( 8, 3)},.3) {$ 8\rangle$};
								\node at ({f( 7,26)},.3) {$\langle 7$};
							\end{tikzpicture}
							
							\begin{tikzpicture}
								\useasboundingbox ({f(26,26)},-0.45) rectangle ({f(3,3)},0.45);
								\fill [mycolour1] ({f(26,26)},-0.1) rectangle ({f(26, 3)},0.1);
								\fill [mycolour1] ({f(25,26)},-0.1) rectangle ({f(25, 3)},0.1);
								\fill [mycolour1] ({f(24,26)},-0.1) rectangle ({f(24, 3)},0.1);
								\fill [mycolour1] ({f(23,26)},-0.1) rectangle ({f(23, 3)},0.1);
								\fill [mycolour1] ({f(22,26)},-0.1) rectangle ({f(22, 3)},0.1);
								\fill [mycolour1] ({f(21,26)},-0.1) rectangle ({f(21, 3)},0.1);
								\fill [mycolour1] ({f(20,26)},-0.1) rectangle ({f(20, 3)},0.1);
								\fill [mycolour1] ({f(19,26)},-0.1) rectangle ({f(19, 3)},0.1);
								\draw ({f( 3, 3)},0)--({f(26,26)},0);
							\end{tikzpicture}
						}
						\caption{On scale with partial labels}
					\end{subfigure}
					\hfill\vfill
					\begin{subfigure}[b]{0.98\textheight}
						\centering {\tiny 
							
							\begin{tikzpicture}
								
								\interval{26}{ 3}{0}{1}{1}
								
								\interval{26}{ 7}{1}{0}{1}
								\interval{ 6}{ 3}{1}{1}{0}
								
								\interval{26}{15}{2}{0}{1}
								\interval{14}{ 7}{2}{1}{0}
								\interval{ 6}{ 4}{2}{0}{1}
								\interval{ 3}{ 3}{2}{1}{0}
								
								\interval{26}{19}{3}{0}{1}
								\interval{18}{15}{3}{1}{0}
								\interval{14}{11}{3}{0}{1}
								\interval{10}{ 7}{3}{1}{0}
								\interval{ 6}{ 5}{3}{0}{1}
								\interval{ 4}{ 4}{3}{1}{0}
								
								\interval{26}{23}{4}{0}{1}
								\interval{22}{19}{4}{1}{0}
								\interval{18}{17}{4}{0}{1}
								\interval{16}{15}{4}{1}{0}
								\interval{14}{13}{4}{0}{1}
								\interval{12}{11}{4}{1}{0}
								\interval{10}{ 9}{4}{0}{1}
								\interval{ 8}{ 7}{4}{1}{0}
								\interval{ 6}{ 6}{4}{0}{1}
								\interval{ 5}{ 5}{4}{1}{0}
								
								\interval{26}{25}{5}{0}{1}
								\interval{24}{23}{5}{1}{0}
								\interval{22}{21}{5}{0}{1}
								\interval{20}{19}{5}{1}{0}
								\interval{18}{18}{5}{0}{1}
								\interval{17}{17}{5}{1}{0}
								\interval{16}{16}{5}{0}{1}
								\interval{15}{15}{5}{1}{0}
								\interval{14}{14}{5}{0}{1}
								\interval{13}{13}{5}{1}{0}
								\interval{12}{12}{5}{0}{1}
								\interval{11}{11}{5}{1}{0}
								\interval{10}{10}{5}{0}{1}
								\interval{ 9}{ 9}{5}{1}{0}
								\interval{ 8}{ 8}{5}{0}{1}
								\interval{ 7}{ 7}{5}{1}{0}
								
								\interval{26}{26}{6}{0}{1}
								\interval{25}{25}{6}{1}{0}
								\interval{24}{24}{6}{0}{1}
								\interval{23}{23}{6}{1}{0}
								\interval{22}{22}{6}{0}{1}
								\interval{21}{21}{6}{1}{0}
								\interval{20}{20}{6}{0}{1}
								\interval{19}{19}{6}{1}{0}
								
								\draw ({-0.02*\textheight},-0)--({-0.96\textheight},-0);
								\draw ({-0.02*\textheight},-1)--({-0.96\textheight},-1);
								\draw ({-0.02*\textheight},-2)--({-0.96\textheight},-2);
								\draw ({-0.02*\textheight},-3)--({-0.96\textheight},-3);
								\draw ({-0.02*\textheight},-4)--({-0.96\textheight},-4);
								\draw ({-0.02*\textheight},-5)--({-0.96\textheight},-5);
								\draw ({-0.02*\textheight},-6)--({-0.96\textheight},-6);
								\draw ({-0.02*\textheight},-3-.15)--({-0.02\textheight},-3+.15);
								\draw ({-0.02*\textheight},-4-.15)--({-0.02\textheight},-4+.15);
								\draw ({-0.02*\textheight},-5-.15)--({-0.02\textheight},-5+.15);
								\draw ({-0.02*\textheight},-6-.15)--({-0.02\textheight},-6+.15);
								
							\end{tikzpicture}
						}
						\caption{Off scale with full labels}
					\end{subfigure}
					\caption{An Unordered Construction of $L_{3,26}$}
					\label{fig: L_3_26}
				\end{figure}
			\end{minipage}
		}
	\end{center}
	
	\bibliographystyle{abbrv} 
	\bibliography{references.bib}

@article{astels2000,
	ISSN = {00029947},
	URL = {http://www.jstor.org/stable/118149},
	author = {Astels, Stephen},
	journal = {Transactions of the American Mathematical Society},
	number = {1},
	pages = {133--170},
	publisher = {American Mathematical Society},
	title = {Cantor Sets and Numbers with Restricted Partial Quotients},
	urldate = {2025-02-19},
	volume = {352},
	year = {2000}
}

@article{hall1947,
	ISSN = {0003486X, 19398980},
	URL = {http://www.jstor.org/stable/1969389},
	author = {Marshall Hall},
	journal = {Annals of Mathematics},
	number = {4},
	pages = {966--993},
	publisher = {[Annals of Mathematics, Trustees of Princeton University on Behalf of the Annals of Mathematics, Mathematics Department, Princeton University]},
	title = {On the Sum and Product of Continued Fractions},
	urldate = {2025-02-14},
	volume = {48},
	year = {1947}
}

@article{Hlavka1975,
	ISSN = {00029947},
	URL = {http://www.jstor.org/stable/1997223},
	abstract = {Let F(m) be the (Cantor) set of infinite continued fractions with partial quotients no greater than m and let F(m) + F(n) = {a + β: a ε F(m), β ε F(n)}. We show that F(3) + F(4) is an interval of length 1.14 \ldots so every real number is the sum of an integer, an element of F(3) and an element of F(4). Similar results are given for F(2) + F(7), F(2) + F(2) + F(4), F(2) + F(3) + F(3) and F(2) + F(2) + F(2) + F(2). The techniques used are applicable to any Cantor sets in R for which certain parameters can be evaluated.},
	author = {James L. Hlavka},
	journal = {Transactions of the American Mathematical Society},
	pages = {123--134},
	publisher = {American Mathematical Society},
	title = {Results on Sums of Continued Fractions},
	urldate = {2025-02-14},
	volume = {211},
	year = {1975}
}

@article{PERES_SHMERKIN_2009, title={Resonance between {Cantor} sets},
	volume={29},
	DOI={10.1017/S0143385708000369},
	number={1},
	journal={Ergodic Theory and Dynamical Systems},
	author={Peres, YUVAL and Shmerkin, PABLO},
	year={2009},
	pages={201–221}
}

@misc{lee2024explicitupperboundsdecay,
	title={Explicit Upper Bounds on Decay Rates of {Fourier} Transforms of Self-similar Measures on Self-similar Sets}, 
	author={Ying Wai Lee},
	year={2024},
	eprint={2412.16621},
	archivePrefix={arXiv},
	primaryClass={math.CA},
	url={https://arxiv.org/abs/2412.16621}, 
}

@misc{brouwer2024cantor,
	author       = {B.M.L. Brouwer},
	title        = {Cantor sets for regular continued fractions and {L\"uroth} series},
	year         = {2024},
	type         = {Bachelor thesis},
	institution  = {TU Delft, Faculty of Electrical Engineering, Mathematics and Computer Science},
	url          = {https://repository.tudelft.nl/record/uuid:c59b330d-04ad-4aa8-887f-bb8926959d83},
	note         = {Mentor: C. Kraaikamp}
}

@article{lee1971sums,
	title={Sums of sets of continued fractions},
	author={Cusick, TW and Lee, RA},
	journal={Proceedings of the American Mathematical Society},
	volume={30},
	number={2},
	pages={241--246},
	year={1971}
}

@article{Good_1941,
	title={The fractional dimensional theory of continued fractions},
	volume={37},
	DOI={10.1017/S030500410002171X},
	number={3},
	journal={Mathematical Proceedings of the Cambridge Philosophical Society},
	author={Good, I. J.},
	year={1941},
	pages={199–228}
}

@article{Shulga2024CusickCantor,
	author       = {Nikita Shulga},
	title        = {On a Conjecture of {Cusick} on a sum of {Cantor} sets},
	journal      = {Transactions of the American Mathematical Society},
	year         = {2024},
	note         = {Early View},
	doi          = {10.1090/tran/9613},
	eprint       = {arXiv:2411.17379},
	primaryclass = {math.NT},
	keywords     = {continued fractions, Cantor sets, arithmetic sums of sets},
	subject      = {11A55, 11J70, 28A80}
}

@article{divis1973,
	author = {Divi{\v{s}}, B.},
	title = {On the sums of continued fractions},
	journal = {Acta Arithmetica},
	volume = {22},
	pages = {157--173},
	year = {1973}
}

@article{steinhaus1920,
	author = {H Steinhaus},
	issn = {0016-2736},
	journal = {Fundamenta Mathematicae.},
	number = {1},
	title = {Sur les distances des points dans les ensembles de mesure positive},
	volume = {1},
	year = {1920},
}

@article{randolph1940,
	ISSN = {00029890, 19300972},
	URL = {http://www.jstor.org/stable/2303836},
	author = {J. F. Randolph},
	journal = {The American Mathematical Monthly},
	number = {8},
	pages = {549--551},
	publisher = {[Taylor & Francis, Ltd., Mathematical Association of America]},
	title = {Distances Between Points of the {Cantor} Set},
	urldate = {2025-05-22},
	volume = {47},
	year = {1940}
}

@article{astels2001sums,
	title={Sums of numbers with small partial quotients II},
	author={Astels, Stephen},
	journal={Journal of Number Theory},
	volume={91},
	number={2},
	pages={187--205},
	year={2001},
	publisher={Elsevier}
}

@article{astels2002sums,
	title={Sums of numbers with small partial quotients},
	author={Astels, Stephen},
	journal={Proceedings of the American Mathematical Society},
	volume={130},
	number={3},
	pages={637--642},
	year={2002}
}

@article{cusick1971sums,
	title={Sums and products of continued fractions},
	author={Cusick, TW},
	journal={Proceedings of the American Mathematical Society},
	volume={27},
	number={1},
	pages={35--38},
	year={1971}
}

@misc{beresnevich2016metricdiophantineapproximationaspects,
	title={Metric {Diophantine} Approximation: aspects of recent work}, 
	author={Victor Beresnevich and Felipe Ramírez and Sanju Velani},
	year={2016},
	eprint={1601.01948},
	archivePrefix={arXiv},
	primaryClass={math.NT},
	url={https://arxiv.org/abs/1601.01948}, 
}

@article{kaufman1980continued,
	title={Continued Fractions and {Fourier} Transforms},
	author={Kaufman, R},
	journal={Mathematika},
	volume={27},
	number={2},
	pages={262--267},
	year={1980},
	publisher={London Mathematical Society}
}

@article{queffelec2003analyse,
	title={Analyse de {Fourier} des Fractions Continues {\`a} Quotients Restreints},
	author={Queff{\'e}lec, Martine and Ramar{\'e}, Olivier},
	journal={Enseignement Math\'ematique},
	volume={49},
	number={3/4},
	pages={335--356},
	year={2003},
	publisher={SWETS \& ZEITLINGER}
}

@book{dajanikraaikamp2002ergodictheoryofnumbers, 
	series={Carus Mathematical Monographs}, 
	title={Ergodic Theory of Numbers}, publisher={Mathematical Association of America}, 
	author={Dajani, Karma and Kraaikamp, Cor}, 
	year={2002}, 
	collection={Carus Mathematical Monographs}
}

@article{sahlsten2024fourier,
	title={{Fourier} transform and expanding maps on {Cantor} sets},
	author={Sahlsten, Tuomas and Stevens, Connor},
	journal={American Journal of Mathematics},
	volume={146},
	number={4},
	pages={945--982},
	year={2024},
	publisher={Johns Hopkins University Press}
}

@article{Moran_1946,
	title={Additive Functions of Intervals and {Hausdorff} Measure},
	volume={42},
	DOI={10.1017/S0305004100022684},
	number={1},
	journal={Mathematical Proceedings of the Cambridge Philosophical Society},
	author={Moran, P. A. P.},
	pages={15--23},
	year={1946},
}

@article{erdHos1962representations,
	title={Representations of real numbers as sums and products of {Liouville} numbers},
	author={Paul Erd\H{o}s},
	journal={Michigan Math. J.},
	volume={9},
	number={1},
	pages={59--60},
	year={1962}
}

@misc{gayfulin2025realnumbersumreal,
	title={Every real number is a sum of two real numbers with diverging partial quotients}, 
	author={Dmitry Gayfulin and Erez Nesharim},
	year={2025},
	eprint={2507.04521},
	archivePrefix={arXiv},
	primaryClass={math.NT},
	url={https://arxiv.org/abs/2507.04521}, 
}

@article{tan2021approximation,
	title={Approximation Properties of {L}{\"u}roth Expansions},
	author={Tan, Bo and Zhou, Qinglong},
	journal={Discrete \& Continuous Dynamical Systems: Series A},
	volume={41},
	number={6},
	year={2021}
}

@article{tan2021dimension,
	title={Dimension Theory of the Product of Partial Quotients in {L}{\"u}roth Expansions},
	author={Tan, Bo and Zhou, Qinglong},
	journal={International Journal of Number Theory},
	volume={17},
	number={05},
	pages={1139--1154},
	year={2021},
	publisher={World Scientific}
}

@article{cao2013efficiency,
	title={The Efficiency of Approximating Real Numbers by {L}{\"u}roth Expansion},
	author={Cao, Chunyun and Wu, Jun and Zhang, Zhenliang},
	journal={Czechoslovak mathematical journal},
	volume={63},
	pages={497--513},
	year={2013},
	publisher={Springer}
}

@misc{lee2025lurothexpansionsdiophantineapproximation,
	title={L\"uroth Expansions in Diophantine Approximation: Metric Properties and Conjectures}, 
	author={Ying Wai Lee},
	year={2025},
	eprint={2502.08408},
	archivePrefix={arXiv},
	primaryClass={math.NT},
	url={https://arxiv.org/abs/2502.08408}, 
}

@article{Das_2023,
	title={Hausdorff dimensions of perturbations of a conformal iterated function system via thermodynamic formalism},
	volume={29},
	ISSN={1420-9020},
	url={http://dx.doi.org/10.1007/s00029-022-00820-z},
	DOI={10.1007/s00029-022-00820-z},
	number={2},
	journal={Selecta Mathematica},
	publisher={Springer Science and Business Media LLC},
	author={Das, Tushar and Fishman, Lior and Simmons, David and Urbański, Mariusz},
	year={2023},
	month=jan 
}

\end{document}